\theoremstyle{plain}
\newtheorem{theorem}{Theorem}[section]
\newtheorem{lemma}[theorem]{Lemma}
\newtheorem{conj}[theorem]{Conjecture}
\theoremstyle{definition}
\theoremstyle{remark}
\numberwithin{equation}{section}
\newcommand{\eps}{\varepsilon}
\newcommand{\abs}[1]{\lvert #1\rvert}
\newcommand{\Lap}{\Delta}
\newcommand{\D}{\nabla}
\newcommand{\CC}{\mathbb{C}}
\newcommand{\RR}{\mathbb{R}}
\newcommand{\Riem}{\mathrm{Rm}}
\newcommand{\vol}{\mathrm{Vol}}
\newcommand{\diag}{\mathrm{diag}}
\newcommand{\Rc}{\mathrm{Rc}}
\newcommand{\Hess}{\mathrm{Hess}}
\newcommand{\tr}{\mathrm{tr}}
\renewcommand{\div}{\mathrm{div}}
\begin{document}

\title{The stability inequality for Ricci-flat cones}

\begin{abstract}
In this article, we thoroughly investigate the stability inequality for Ricci-flat cones. Perhaps most importantly, we prove that the Ricci-flat cone over $\CC P^2$ is stable, showing that the first stable non-flat Ricci-flat cone occurs in the smallest possible dimension. On the other hand, we prove that many other examples of Ricci-flat cones over 4-manifolds are unstable, and that Ricci-flat cones over products of Einstein manifolds and over K\"ahler-Einstein manifolds with $h^{1,1}>1$ are unstable in dimension less than 10. As results of independent interest, our computations indicate that the Page metric and the Chen-LeBrun-Weber metric are unstable Ricci shrinkers. As a final bonus, we give plenty of motivations, and partly confirm a conjecture of Tom Ilmanen relating the $\lambda$-functional, the positive mass theorem and the nonuniqueness of Ricci flow with conical initial data.
\end{abstract}

\author{Stuart Hall}
\address{Department of Applied Computing, The University of Buckingham, UK}
\email{stuart.hall@buckingham.ac.uk}

\author{Robert Haslhofer}
\address{Department of Mathematics, ETH Z\"{u}rich, Switzerland}
\email{robert.haslhofer@math.ethz.ch}

\author{Michael Siepmann}
\address{Department of Mathematics, ETH Z\"{u}rich, Switzerland}
\email{michael.siepmann@math.ethz.ch}

\maketitle

\section{Introduction}

The purpose of this article is to thoroughly investigate the stability of Ricci-flat cones. To start with, we call a Ricci-flat manifold or cone \emph{stable}, if
\begin{equation}\label{stabineq}
\int_M 2\Riem(h,h)\leq \int_M \abs{\nabla h}^2
\end{equation}
for all transverse symmetric 2-tensors $h$ with compact support. In this definition, $\Riem(h,h)=R_{ijkl}h_{ik}h_{jl}$ and transverse means $\D_i h_{ij}=0$.\\

Historical motivations for this notion of stability come from considering the linearization of the Ricci curvature and from computing the second variation of the Einstein-Hilbert functional restricted to transverse-traceless tensors \cite{Be}. A more modern motivation comes from the second variation of Perelman's $\lambda$-functional \cite{Pe}. In the case of closed manifolds this second variation is computed in \cite{CHI}. In the case of non-compact spaces one can use a suitable variant of Perelman's energy functional as explained in \cite{Ha2}.\\

Stability plays an important role in gravitational physics, see e.g. \cite{GPY}, and also is of great numerical relevance. Complementing that, our main motivations for the problem under consideration come from questions concerning the regularity of Ricci-flat spaces, from questions about generic Ricci flow singularities, and from a conjecture of Tom Ilmanen  relating the $\lambda$-functional, the positive mass theorem and the nonuniqueness of Ricci flow with conical initial data. They are all inspired by analogies and disanalogies between minimal surfaces (respectively the mean curvature flow) and Ricci-flat spaces (respectively the Ricci flow), and will be described now.

\subsection{Motivation I: Regularity of Ricci-flat spaces}
To start with the mentioned analogy, the stability inequality for minimal hypersurfaces,
\begin{equation}
\int_\Sigma \left(\abs{A}^2+\Rc(\nu,\nu)\right)\psi^2 \leq \int_\Sigma \abs{\nabla \psi}^2,
\end{equation}
has a long and successful history. In particular, it plays a prominent role in the work of Schoen-Yau on the structure of manifolds with positive scalar curvature and in their proof of the positive mass theorem \cite{SY1,SY2}. By Almgren's big regularity theorem the singular set of a general area-minimizing rectifiable current has codimension at least $2$ \cite{Al}. In the case of \emph{stable} minimal hypersurfaces there is a much better (and in fact much easier to prove) conclusion: the singular set has codimension at least $7$ \cite{SiL}. The number $7$ ultimately comes from the Simons cones \cite{SiJ}, cones with vanishing mean curvature over a product of spheres. In analogy with the Simons cones, as computed some time ago by Tom Ilmanen \cite{Ipc} (see also \cite{GHP}), Ricci-flat cones over a product of spheres are unstable in dimension less than $10$.\\

In the general case of Ricci curvature, there is a deep regularity theory developed by Cheeger, Colding and Tian, see \cite{C} for a very nice survey (see also the recent work of Cheeger-Naber \cite{CN} for very interesting quantitative results). In particular (focusing on the Ricci-flat case here), they proved that the singular set of non-collapsed Gromov-Hausdorff limits of Ricci-flat manifolds with special holonomy has codimension at least $4$ (it is conjectured that this also holds without the special holonomy assumption). Given this general regularity theory, it is an intriguing question whether or not an improved regularity occurs in the case of \emph{stable} Ricci-flat spaces.

\subsection{Motivation II: Generic Ricci flow singularities}
Recently, Colding and Minicozzi proposed a generic mean curvature flow in dimension two \cite{CM1}. The idea, going back to Huisken, is that very complicated singularities can form in two-dimensional mean curvature flow, but all except the cylinders and spheres are unstable and can be perturbed away. Singularities are modeled on shrinkers, and recently Kapouleas-Kleene-M{\o}ller \cite{KKM} constructed examples of shrinkers with arbitrarily high genus.\\

Two-dimensional mean curvature flow shares many similarities with four-dimensional Ricci flow. One reason for this is that the integrals $\int \abs{A}^2d\mu$ and $\int \abs{\Riem}^2dV$ are scale invariant in dimension $2$ and $4$, respectively. These integrals show up naturally in the compactness theorems for shrinkers in \cite{CM2} and \cite{HM} and can be estimated using the Gauss-Bonnet formula in the respective dimension. As hinted at in \cite{CHI}, there is hope that actually all but very few singularity models for four-dimensional Ricci flow are unstable (the known stable examples are $S^4,S^3\times \RR,S^2\times \RR^2,\mathbb{C}P^2$ and their quotients, and presumably the blow-down shrinker of Feldman-Ilmanen-Knopf \cite{FIK} is also stable). This might lead to a theory of generic Ricci flow in dimension four.

\subsection{Motivation III: Ilmanen's conjecture}
In July 2008, Tom Ilmanen conjectured an intriguing relationship for ALE-spaces and Ricci-flat cones between:
\begin{enumerate}
\item lambda not a local maximum
\item failure of positive mass
\item nonuniqueness of Ricci flow
\end{enumerate}
He also told us that part of the problem is to find an appropriate definition of Perelman's $\lambda$-functional in the noncompact case. We will explain all this later. For the moment, let us just mention that the positive mass theorem \cite{SY2, Wi} does not generalize to ALE-spaces \cite{LB}, that Ricci flows coming out of cones have been constructed in \cite{FIK,GK,SS}, and that nonuniqueness caused by unstable cones has been observed for various other geometric heat flows \cite{I,GR}.

\subsection{Heuristics about proving instability}
To prove instability of a Ricci-flat cone, we have to find an almost parallel test variation $h$  (in the sense that the right hand side of (\ref{stabineq}) is small), such that the left hand side is large. To discuss the left hand side, let us diagonalize $h=\diag(\lambda_1,\ldots,\lambda_n)$ in an orthonormal basis $e_1,\ldots,e_n$ and introduce the following curvature condition: The \emph{sectional curvature matrix} $K$ is the matrix with the entries (no sum!)
\begin{equation}
K_{ij}=\Riem(e_i,e_j,e_i,e_j).
\end{equation}
Observe that, $\Riem(h,h)=\lambda^TK\lambda$, so to make the left hand side large, we have to find a large eigenvalue of the sectional curvature matrix. Note that $K$ is a symmetric matrix with vanishing diagonal. Thus, unless $K=0$, there exists at least one positive eigenvalue. Taking into account the interaction with the Hardy-inequality, we thus expect that many Ricci-flat cones in small dimensions are unstable.

\subsection{The results}
Obviously, flat implies stable. Next, in analogy with the Simons cones, we consider Ilmanen's example of Ricci-flat cones over products of spheres or more generally Ricci-flat cones over products of Einstein manifolds:

\begin{theorem}\label{thm1}
A Ricci-flat cone over a product of Einstein manifolds is unstable in dimension less than 10.
\end{theorem}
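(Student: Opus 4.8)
The plan is to exhibit an explicit transverse-traceless destabilizing variation adapted to the product splitting, and then to reduce the stability inequality to a one-dimensional Hardy-type inequality whose critical dimension is exactly $10$. First I would reduce to the case of two factors: writing the link as $N = N_1^{d_1}\times N_2^{d_2}$ (grouping all but one factor into $N_2$ if there are several), the Ricci-flat cone condition $\Rc_{g_N}=(n-2)g_N$ forces each factor to be Einstein with the \emph{common} constant $n-2$, where $n-1=d_1+d_2=\dim N$, so the cone is $g=dr^2+r^2(g_1+g_2)$. As test tensor I would take
\[
h=\varphi(r)\,r^2\,(d_2\,g_1-d_1\,g_2),\qquad T:=d_2\,g_1-d_1\,g_2,
\]
with $\varphi$ compactly supported in $(0,\infty)$. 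The key structural observation is that $T$ is parallel on $N$ and $g_N$-traceless; a short computation with the cone Christoffel symbols (where the radial derivatives of $r^2$ cancel against the $\tfrac1r$ terms, and $\tr_{g_N}T=0$ annihilates the last piece) then shows that $h$ is automatically transverse and traceless on the cone. Thus transversality, which might appear to be the delicate point, is free.

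Next I would carry out the two pointwise computations. Using the cone sectional curvatures $K^{\mathrm{cone}}_{ij}=(K^N_{ij}-1)/r^2$ for link directions and $K^{\mathrm{cone}}_{ri}=0$, the eigenvalues of $h$ (namely $\varphi d_2$ on $N_1$, $-\varphi d_1$ on $N_2$, and $0$ radially), and the Einstein trace identity $\sum_{i\neq j}K^{N_a}_{ij}=(n-2)d_a$, the formula $\Riem(h,h)=\sum_{i,j}K_{ij}\lambda_i\lambda_j$ collapses to the clean expression
\[
\Riem(h,h)=\frac{\varphi^2}{r^2}\,d_1 d_2\,(n-1)^2 .
\]
A parallel computation of $\nabla h$ (whose only nonzero components are $\nabla_r h_{ab}=\varphi' r^2 T_{ab}$ and the mixed terms $\nabla_a h_{rb}=-\varphi r\,T_{ab}$ forced by the cone connection), together with $\abs{T}^2_{g_N}=d_1 d_2(n-1)$, gives
\[
\abs{\nabla h}^2=d_1 d_2\,(n-1)\left(\varphi'^2+\frac{2\varphi^2}{r^2}\right).
\]

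I would then insert these into (\ref{stabineq}), use $dV=r^{n-1}\,dr\,dV_N$, and cancel the common factor $d_1 d_2(n-1)\vol(N)$, reducing stability to the one-dimensional inequality
\[
2(n-2)\int_0^\infty \frac{\varphi^2}{r^2}\,r^{n-1}\,dr\;\leq\;\int_0^\infty \varphi'^2\,r^{n-1}\,dr .
\]
The sharp Hardy inequality on the half-line with weight $r^{n-1}$ has best constant $\left(\tfrac{n-2}{2}\right)^2$, which is not attained. Hence this inequality \emph{fails}, and the cone is unstable, precisely when $2(n-2)>\left(\tfrac{n-2}{2}\right)^2$, i.e.\ $8>n-2$, i.e.\ $n<10$; the violating variations are produced by logarithmically cutting off the borderline profile $r^{-(n-2)/2}$.

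The main work is the curvature computation yielding the coefficient $(n-1)^2$ and the careful bookkeeping of the mixed connection terms in $\abs{\nabla h}^2$, since it is the exact constants $2(n-2)$ versus $\left(\tfrac{n-2}{2}\right)^2$ that pin the threshold at $n=10$. The only genuine subtlety is that the Hardy constant is sharp but unattained, so strict instability for $n<10$ must be obtained from a cutoff sequence $\varphi_\eps$ approaching the Rayleigh quotient rather than from a single minimizer.
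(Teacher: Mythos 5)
Your proposal is correct and follows essentially the same route as the paper: the same test variation $h=\varphi(r)r^2(d_2 g_1-d_1 g_2)$ (the paper's $f(r)r^2(\gamma_1/n_1-\gamma_2/n_2)$ up to the constant factor $d_1d_2$), the same computations yielding $|\nabla h|^2-2\Riem(h,h)\propto \varphi'^2-2(n-2)\varphi^2/r^2$, and the same comparison with the sharp weighted Hardy constant $4/(n-2)^2$ to obtain instability exactly for $n<10$. The only cosmetic differences are that you compute $\Riem(h,h)$ via the sectional-curvature-matrix/eigenvalue formula rather than the paper's $\gamma\odot\gamma$ tensor calculus, and that you spell out the reduction to two factors and the cutoff argument around the unattained Hardy constant, both of which the paper leaves implicit.
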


We also prove:

\begin{theorem}\label{thm2}
A Ricci-flat cone over a K\"ahler-Einstein manifold with Hodge number $h^{1,1}>1$ is unstable in dimension less than 10.
\end{theorem}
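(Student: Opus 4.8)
The plan is to exhibit an explicit destabilizing test variation built from the K\"ahler geometry of the link and to reduce the stability inequality (\ref{stabineq}) to a one-dimensional Hardy-type problem, in the same spirit as the proof of Theorem \ref{thm1}. Write the Ricci-flat cone as $(C(N),dr^2+r^2g_N)$, where $(N,g_N)$ is K\"ahler-Einstein of real dimension $n-1$, normalized so that $\Rc_{g_N}=(n-2)g_N$ (so $n$ is odd and $C(N)$ is $n$-dimensional and Ricci-flat). Given a transverse-traceless symmetric $2$-tensor $\hat h$ on $N$, I would use the tangential test variation $h=u(r)\,\hat h$, where $u$ is a compactly supported radial profile and $\hat h$ is pulled back to the cone with no $dr$-components. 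A direct computation with the warped-product connection and curvature shows that $h$ is again transverse-traceless on $C(N)$, that the tangential sectional curvatures of the cone are $\tfrac{1}{r^2}(\kappa_N-1)$, and that the quadratic form separates variables. If in addition $\hat h$ is an eigentensor of the Lichnerowicz Laplacian $\Lap_L$ on $N$ with eigenvalue $\mu$, I expect the Einstein normalization to collapse all Ricci contributions and yield the clean reduction
\begin{equation*}
\int_{C(N)}\abs{\nabla h}^2-2\Riem(h,h)=\norm{\hat h}_{L^2(N)}^2\left(\int_0^\infty (u')^2 r^{n-5}\,dr+(\mu-4)\int_0^\infty u^2 r^{n-7}\,dr\right).
\end{equation*}

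With this reduction in hand, the sharp Hardy inequality $\int_0^\infty (u')^2 r^{n-5}\,dr\geq(\tfrac{n-6}{2})^2\int_0^\infty u^2 r^{n-7}\,dr$ shows that the cone is unstable as soon as the link carries a transverse-traceless eigentensor with $\mu<4-(\tfrac{n-6}{2})^2$. The heart of the matter is thus to produce a transverse-traceless tensor on $N$ with the smallest possible Lichnerowicz eigenvalue, and this is exactly where the hypotheses enter. Since $h^{1,1}>1$, there is a nonzero primitive harmonic $(1,1)$-form $\phi$, namely the primitive projection of any harmonic $(1,1)$-form not proportional to the K\"ahler form $\omega$. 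Using the parallel complex structure $J$, I set $\hat h(X,Y)=\phi(JX,Y)$, the standard correspondence between $(1,1)$-forms and Hermitian symmetric $2$-tensors, and I then have to verify that $\hat h$ is transverse-traceless and $\Lap_L$-harmonic.

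The main obstacle is precisely this last verification, which is where all the K\"ahler structure is consumed. Tracelessness of $\hat h$ is equivalent to primitivity of $\phi$ (orthogonality to $\omega$), and the divergence-free condition follows from the harmonicity of $\phi$ together with $\nabla J=0$; the genuinely important point is that, because $J$ is parallel, the map $\phi\mapsto\hat h$ intertwines the Hodge-de Rham Laplacian on $(1,1)$-forms with the Lichnerowicz Laplacian on Hermitian symmetric $2$-tensors, so that $\Lap_d\phi=0$ forces $\Lap_L\hat h=0$, i.e. $\mu=0$. As a reassuring special case, the K\"ahler form itself corresponds to $g_N$, and one checks directly that $\Lap_L g_N=0$ for any Einstein metric; this is why one must pass to the primitive part to obtain a nonzero traceless tensor, and why the borderline case $h^{1,1}=1$, realized by $\CC P^2$, is excluded. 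Granting $\mu=0$, the instability criterion becomes $(\tfrac{n-6}{2})^2<4$, i.e. $2<n<10$, which holds for every admissible cone dimension $n\in\{5,7,9\}$; hence the cone is unstable whenever $n<10$, completing the proof.
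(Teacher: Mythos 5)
Your proof is correct and takes essentially the same approach as the paper: your test variation $h=u(r)\hat h$ is the paper's $h=f(r)r^2k$ under the substitution $u=fr^2$, and both arguments separate variables and invoke the sharp weighted Hardy inequality to arrive at the same criterion $n<10$. The only difference is that you spell out why $h^{1,1}>1$ yields a nonzero transverse-traceless Lichnerowicz-harmonic tensor (via primitive harmonic $(1,1)$-forms and $\D J=0$), a fact the paper simply asserts in the form $(\Lap+2\Riem_\gamma)k=2(n-2)k$.
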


Given Theorem \ref{thm1} and Theorem \ref{thm2}, one might guess for a moment that the singular set of stable Ricci-flat spaces has codimension at least $10$. However, we manage to show that this naive guess is very wrong. In fact, we prove that the first stable non-flat Ricci-flat cone occurs in the smallest possible dimension (note that every at most 4-dimensional Ricci-flat cone is flat, since every at most 3-dimensional Einstein metric is automatically a space-form):

\begin{theorem}\label{thm3}
The Ricci-flat cone over $\CC P^2$ is stable.
\end{theorem}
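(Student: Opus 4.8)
The plan is to reduce the stability inequality to a one-dimensional Hardy-type problem by separation of variables, and to locate all the analytic content in a single spectral bound on the link. On the Ricci-flat cone $(C,\bar g)$, $\bar g = dr^2 + r^2 g$ over $M = \CC P^2$ with $g$ normalized so that $\Rc = 3g$ (the condition making the $5$-dimensional cone Ricci-flat), the quadratic form $Q(h) = \int_C(\abs{\D h}^2 - 2\Riem(h,h))$ coincides with the Dirichlet form $\int_C\langle\Lap_L h,h\rangle$ of the Lichnerowicz Laplacian, since the Ricci terms in $\Lap_L$ vanish. First I would set up the decomposition of transverse symmetric $2$-tensors on the cone into the three standard families of modes—those built from eigenfunctions of the scalar Laplacian on $M$, from coclosed $1$-forms, and from transverse-traceless eigentensors of $\Lap_L$ on $M$—so that $Q$ becomes a direct sum of one-dimensional quadratic forms in the radial variable $r$, one per link eigenmode.

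The model computation is the transverse-traceless link mode. Writing $h_{ab} = f(r)\eta_{ab}$ with $h_{rr}=h_{ra}=0$, where $\eta$ is a TT-tensor on $M$ with $\Lap_L\eta = \lambda\eta$, one checks directly that $h$ is transverse and traceless on the cone, and a Christoffel-symbol computation together with the cone's Gauss equation gives
\[
Q(h) = \int_0^\infty \Big( (f')^2 + \frac{\lambda - 4}{r^2}\,f^2 \Big)\,dr .
\]
The sharp one-dimensional Hardy inequality $\int_0^\infty (f')^2 \geq \tfrac14\int_0^\infty r^{-2}f^2$ then yields $Q(h)\geq(\lambda - \tfrac{15}{4})\int_0^\infty r^{-2}f^2$, so these modes are stable exactly when the bottom of the TT-spectrum satisfies $\lambda_{\min}\geq\tfrac{15}{4}$; this is precisely the borderline behind the instability in Theorems \ref{thm1} and \ref{thm2}. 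The remaining scalar and vector modes I expect to be strictly less binding: the associated radial problems involve the scalar eigenvalues, bounded below by $\mu_1\geq 4$ via Lichnerowicz-Obata, and the Hodge eigenvalues on coclosed $1$-forms, controlled by Bochner (the lowest of which come from the Killing fields of the $SU(3)$-action and sit in gauge directions). Checking that these meet their Hardy thresholds is routine but must be carried out.

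The crux, and the step I expect to be the main obstacle, is therefore the quantitative bound $\lambda_{\min}^{TT}(\CC P^2)\geq\tfrac{15}{4}$. Here the Kähler-Einstein structure is essential. Using the complex structure $J$, the TT-tensors split into the Hermitian ($J$-invariant) part, corresponding via $\eta\mapsto\omega=\eta(J\cdot,\cdot)$ to primitive real $(1,1)$-forms, and the anti-invariant part, corresponding to $(2,0)+(0,2)$-forms that govern deformations of the complex structure; on a Kähler-Einstein manifold the Weitzenböck terms match up so that $\Lap_L$ differs from the Hodge Laplacian on the associated forms only by an explicit curvature constant. The key point is exactly the hypothesis that fails in Theorem \ref{thm2}: because $\CC P^2$ has $h^{1,1}=1$ and $h^{2,0}=0$, there are no primitive harmonic $(1,1)$-forms and no holomorphic $2$-forms, so the would-be zero modes of the Hodge Laplacian—the source of instability for Kähler-Einstein links with $h^{1,1}>1$—are simply absent. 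Consequently $\Lap_L$ has trivial kernel on TT-tensors (the Fubini-Study metric is a rigid Einstein metric) and a positive spectral gap.

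To upgrade this gap to the sharp value $\tfrac{15}{4}$ I would compute the spectrum explicitly. Since $\CC P^2 = SU(3)/\mathrm{S}(U(2)\times U(1))$ is a symmetric space, the eigenvalues of $\Lap_L$ on each $SU(3)$-irreducible summand of the TT-tensors are Casimir eigenvalues, and one reads off the smallest one and verifies the inequality; alternatively, a Bochner-Weitzenböck estimate on primitive $(1,1)$-forms, exploiting the explicit curvature pinching of the Fubini-Study metric, should deliver the bound directly. This eigenvalue estimate is the genuine content of the theorem—it is what singles out $\CC P^2$ as the first dimension in which stability appears, in contrast to the unstable cones of Theorems \ref{thm1} and \ref{thm2}. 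Assembling the per-mode positivity then gives $Q(h)\geq 0$ for all transverse $h$ with compact support, which is the stability inequality (\ref{stabineq}).
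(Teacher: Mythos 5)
Your strategy---separating a transverse tensor into link scalar/vector/TT modes, reducing to one-dimensional Hardy problems in $r$, and feeding in a spectral bound on the link---is genuinely different from the paper's proof, which never diagonalizes into modes: it expands a general transverse $h$ as $A\,dr^2+rB_i(dr\otimes dx^i+dx^i\otimes dr)+r^2C_{ij}dx^i\otimes dx^j$ and closes the estimate componentwise, using Kato plus Hardy for the radial derivatives, Warner's inequality (Theorem \ref{thmwarner}) for the $C$-block, and integrations by parts against the transversality relations \eqref{div0}--\eqref{divi} combined with Young's inequality with tuned parameters $(\alpha,\beta,\eps)$. Your TT-sector computation is correct and consistent with the paper's Lemma in Section \ref{secff}: for $H=f(r)r^2\eta$ with $\eta$ transverse-traceless on the link, stability of that sector is equivalent to $\sigma:=\inf\int(\abs{\D\eta}^2-2\Riem(\eta,\eta))/\int\abs{\eta}^2\geq -\tfrac94$, and the spectral input you propose to extract from $SU(3)$ harmonic analysis is exactly Warner's theorem, which gives $\sigma\geq 0$. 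But note this means the TT sector has a huge margin ($0$ versus $-\tfrac94$); contrary to your assessment, it is \emph{not} the crux of the theorem, only the place where instability would appear if it existed (as it does for the links in Theorems \ref{thm1} and \ref{thm2}).

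The genuine gap is the sector you defer as ``routine but must be carried out'': the modes carrying the $dr$-components $A$, $B$ and the trace part of $C$. This sector does \emph{not} split into independent one-dimensional Hardy problems, one per link eigenvalue: the transversality constraints \eqref{div0}--\eqref{divi} couple $A$, $B$ and $C$, so each scalar (resp.\ coclosed vector) eigenfunction of the link produces a \emph{coupled system} of radial functions, and positivity of the quadratic form on such a system is not a consequence of a link eigenvalue bound plus the Hardy constant; one must control the genuinely mixed terms $A\,\div(B)$, $\langle B,\div(C)\rangle$ and $A\,\tr(C)$. This is precisely where all the difficulty of the theorem lies: in the paper's proof these cross terms are absorbed by integrating by parts against \eqref{div0}--\eqref{divi} and applying Young's inequality, and the resulting coefficients (for instance $-38-\alpha^2+14\beta$ with $\alpha=21/5$, $\beta=4$, which equals $0.36$) are positive only barely, so a soft appeal to Lichnerowicz--Obata and Bochner cannot be expected to close this sector without a real computation. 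Moreover, your parenthetical dismissal of the lowest coclosed $1$-form modes as ``gauge directions'' is not admissible as stated: the stability inequality \eqref{stabineq} quantifies over \emph{all} transverse compactly supported tensors, with no quotient by diffeomorphisms, so Killing-field modes must either be estimated like any others or be proved to be exact null directions of the form (i.e.\ shown to be transverse Lie derivatives of the cone metric on which the form vanishes by an integration-by-parts identity)---an additional argument you have not supplied. Until the constrained scalar/vector sector is actually worked out, your proposal is a program rather than a proof.
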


Note that no weak definition of Ricci curvature is available (see however the recent week definition of Ricci bounded below due to Lott-Villani and Sturm \cite{LV,St}),  but the main point of Theorem \ref{thm3} is that the analogy with stable minimal hypersurfaces breaks down already at the level of cones. To rescue the analogy between minimal surfaces and Ricci-flat spaces, one can of course reformulate this last statement, and conclude that Ricci-flat spaces behave more like minimal surfaces of general codimension.\\

The above theorems answer the question posed in Motivation I. Related to Motivation II, and also to obtain a more detailed understanding of the problem from Motivation I, we examine the case of Ricci-flat cones over 4-manifolds in greater detail: Besides the one over $\CC P^2$, the other fundamental examples of Ricci-flat cones over 4-manifolds are the one over $S^2\times S^2$, the ones over $(\CC P^2\sharp p \overline{\CC P^2})\, {(3\leq p\leq 8)}$ with Tian's K\"ahler-Einstein metrics \cite{Ti}, the one over $\CC P^2\sharp \overline{\CC P^2}$ with the Page metric \cite{Pa}, and the one over $\CC P^2\sharp 2 \overline{\CC P^2}$ with the Chen-LeBrun-Weber metric \cite{CLW}. We prove:

\begin{theorem}\label{thm4}
The Ricci-flat cones $C(S^2\times S^2)$, $C(\CC P^2\sharp p \overline{\CC P^2})\, {(3\leq p\leq 8)}$ and $C(\CC P^2\sharp \overline{\CC P^2})$, are all unstable.
\end{theorem}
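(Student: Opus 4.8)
The statement splits into three families, and the plan is to reduce the first two to the general theorems already proved and to treat the Page metric directly. For $C(S^2\times S^2)$, the round product is a product of two $2$-dimensional Einstein manifolds and the cone has dimension $5<10$, so instability is immediate from Theorem \ref{thm1}. For the cones $C(\CC P^2\sharp p\overline{\CC P^2})$ with $3\le p\le 8$, which carry Tian's K\"ahler-Einstein metrics \cite{Ti}, I would first record that $\CC P^2\sharp p\overline{\CC P^2}$ is a rational surface, so $h^{2,0}=0$ and hence $h^{1,1}=b_2=1+p\ge 4>1$; since the cone again has dimension $5<10$, Theorem \ref{thm2} applies and gives instability. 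Thus the only genuinely new work lies in the Page metric.

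The remaining case is $C(\CC P^2\sharp\overline{\CC P^2})$ with the Page metric \cite{Pa}, which is Einstein but \emph{not} K\"ahler (indeed $\CC P^2\sharp\overline{\CC P^2}$ carries no K\"ahler-Einstein metric, its Futaki invariant being nonzero), so neither Theorem \ref{thm1} nor Theorem \ref{thm2} is available and I would argue directly from the stability inequality (\ref{stabineq}). The first step is the standard separation-of-variables reduction for cones: one tests (\ref{stabineq}) against a variation $h=\phi(r)\,T$, where $T$ is the natural lift to the cone of a transverse-traceless eigentensor of the Lichnerowicz Laplacian on the link $N^4$ and $\phi$ is a radial cutoff. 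Integrating over $N$ collapses the problem to a one-dimensional radial inequality whose optimal profile is governed by the Hardy inequality, and this produces a dimension-dependent eigenvalue threshold $\lambda_{\mathrm{crit}}$ below which $\Riem(h,h)$ outweighs $\abs{\nabla h}^2$ and the cone is unstable. For the present dimension ($\dim N=4$, cone dimension $5$) this threshold can be written down explicitly.

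It then remains to exhibit a transverse-traceless eigentensor on the Page manifold whose eigenvalue lies below $\lambda_{\mathrm{crit}}$, and here I would exploit the cohomogeneity-one ($U(2)$-invariant) structure of the Page metric. Restricting to $U(2)$-invariant symmetric $2$-tensors, which are parametrized by finitely many functions of the single cohomogeneity variable, reduces the eigentensor equation to an ODE system; more economically, one writes down by hand the natural invariant transverse-traceless tensor measuring the anisotropy between the squashed $S^3$ and the base (the analogue of the destabilizing ``difference of factors'' tensor from the product case), lifts it to the cone with an optimized radial profile, and checks that the left-hand side of (\ref{stabineq}) dominates the right-hand side. I expect the main obstacle to be exactly this last verification: since the Page metric is only semi-explicit, its warping functions solving the Einstein ODE, the resulting integrals are not elementary, and the difficulty is to upgrade a numerical sign computation into a rigorous inequality. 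This is also what I expect to distinguish the Page case from the Chen-LeBrun-Weber case, whose metric is provided only by an existence theorem \cite{CLW} and for which, correspondingly, only numerical evidence of instability is available.
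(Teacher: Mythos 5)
Your reductions for the first two families are exactly right and coincide with the paper's proof: $C(S^2\times S^2)$ is unstable by Theorem \ref{thm1} (equivalently Theorem \ref{produnstabthm}), and the cones over Tian's K\"ahler--Einstein metrics are unstable by Theorem \ref{thm2} (equivalently Theorem \ref{kaehlerthm}), since $h^{1,1}=1+p>1$ and the cone dimension is $5<10$. Your reduction of the Page case to a link-level inequality with the Hardy threshold $\tfrac{9}{4}$ (from $C_H=4/9$ in dimension $5$) is also the same first step the paper takes. The genuine gap is the last step: you propose to exhibit a $U(2)$-invariant destabilizing tensor on the Page manifold and verify the strict inequality by direct computation, but you then concede that this verification may only be a ``numerical sign computation'' that you do not know how to make rigorous. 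That verification \emph{is} the theorem; as your proposal stands, the Page case is a plan with its decisive step unproved, so Theorem \ref{thm4} is not established.

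The paper closes exactly this gap by a different route, which is worth comparing. Instead of working with the Page metric $g$ directly, it uses the conformal description $g=s_k^{-2}k$ where $k$ is an extremal K\"ahler metric, takes the test tensor from a harmonic $(1,1)$-form of $(\Sigma,k)$ (available since $h^{1,1}(\CC P^2\sharp\overline{\CC P^2})=2>1$), and combines the Bochner formula with the conformal transformation laws to obtain the clean identity
\begin{equation*}
\int_\Sigma \left(-\abs{\D^g h}_g^2+2\Riem_g(h,h)\right)dV_g=\int_\Sigma \left(6-\Lap_k s^2\right)\abs{h}_g^2\,dV_g,
\end{equation*}
so that instability of the cone follows from the pointwise bound $\Lap_k s^2<\tfrac{15}{4}$ (Theorem \ref{Pageest}). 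The point of this detour is precisely rigor: the extremal K\"ahler metric conformal to the Page metric is toric with an \emph{explicit} Calabi--Abreu symplectic potential, so $\Lap_k s^2$ is an explicit algebraic function on the moment trapezium and the bound $\tfrac{12}{\kappa}K<2.65<\tfrac{15}{4}$ is a finite polynomial computation, not a numerical approximation. (Incidentally, your description of the Page metric as ``only semi-explicit'' is inaccurate --- Page's metric is known in closed form --- but explicitness of the metric alone does not hand you the destabilizing direction; the conformal K\"ahler structure is what produces the test tensor and the computable criterion.) This also corrects your explanation of why Chen--LeBrun--Weber remains a conjecture: the distinction is that \emph{its} extremal K\"ahler potential is only accessible through the Donaldson--Bunch numerical algorithm, whereas for Page it is explicit.
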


For the cone over the Chen-LeBrun-Weber metric we carried out a computation that strongly indicates that it is also unstable. However, there is one particular step in our computation that relies on a numerical estimate for extremal K\"ahler metrics using the algorithm of Donaldson-Bunch \cite{DonBun}. Therefore, we state the following as a conjecture and not as a theorem:
\begin{conj}\label{conj1}
The Ricci-flat cone $C(\CC P^2\sharp 2 \overline{\CC P^2})$ is unstable.
\end{conj}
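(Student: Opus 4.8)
The strategy is the same separation-of-variables reduction that underlies Theorems \ref{thm1}, \ref{thm2} and \ref{thm4}, specialized to the link $N=\CC P^2\sharp 2\overline{\CC P^2}$ equipped with the Chen-LeBrun-Weber Einstein metric $g$, normalized so that $\Rc_g=3g$ (so that the $5$-dimensional cone $C(N)$ is Ricci-flat). First I would test the stability inequality \eqref{stabineq} on separated tensors of the form $h=f(r)\,h_N$, where $h_N$ is a fixed transverse-traceless symmetric $2$-tensor on $N$ and $f$ is a compactly supported radial cutoff. Since the curvature of the cone scales like $r^{-2}$, both sides of \eqref{stabineq} collapse to a one-dimensional quadratic form
\begin{equation}
Q(f)=\int_0^\infty\Big(A\,(f')^2+\tfrac{B}{r^2}f^2\Big)\,r^{n-1}\,dr, \qquad (n=5),
\end{equation}
with $A>0$ and $B$ determined by the competition, over $N$, between the curvature integral $\int_N 2\Riem(h_N,h_N)$ and the tensor-Laplacian integral $\int_N\abs{\D h_N}^2$. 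By the sharp Hardy inequality with constant $\big(\tfrac{n-2}{2}\big)^2=\tfrac94$, the form $Q$ is indefinite precisely when $B<-\tfrac94A$; thus instability of the cone reduces to producing a single tensor $h_N$ on $N$ whose curvature integral beats its Laplacian integral by the explicit margin dictated by this threshold.

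Next I would construct the candidate $h_N$ exactly as in the proof of Theorem \ref{thm2}. Because $\CC P^2\sharp 2\overline{\CC P^2}$ carries an \emph{extremal} K\"ahler metric $\tilde g$ with $h^{1,1}=b_2=3>1$, there is a two-dimensional space of primitive harmonic $(1,1)$-forms $\phi$, i.e. forms that are anti-self-dual and trace-free with respect to the K\"ahler form. Each such $\phi$ gives a symmetric, trace-free, divergence-free tensor $\tilde h_{ij}=\phi_{ik}J^k{}_j$ on $(N,\tilde g)$, which is the mechanism exploited in Theorem \ref{thm2}. The obstruction to quoting Theorem \ref{thm2} directly is that the Chen-LeBrun-Weber metric is only \emph{conformally} K\"ahler-Einstein: one has $g=u^2\tilde g$ for a positive conformal factor $u$, and $\tilde g$ itself is not Einstein. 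I would therefore transplant $\tilde h$ to a symmetric $2$-tensor on $(N,g)$, re-project it to be transverse with respect to $g$, and then rewrite the Rayleigh quotient of the first step using the conformal transformation laws relating $\Riem_g$ to $\Riem_{\tilde g}$, $\Hess u$ and $\abs{\D u}^2$.

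The hard part is the final evaluation of this Rayleigh quotient. Unlike the K\"ahler-Einstein cases, neither $\tilde g$ nor the conformal factor $u$ (and hence neither $g$ nor its curvature tensor) is available in closed form, so the integrals defining $A$ and $B$ cannot be computed exactly. The only access is numerical: one approximates the extremal K\"ahler metric $\tilde g$ by the balanced metrics produced by the Donaldson-Bunch algorithm \cite{DonBun}, reconstructs $u$, $g$ and $\Riem_g$, and evaluates the quotient on the computer. I expect this computation to return a value of $B/A$ comfortably below $-\tfrac94$, strongly indicating instability; but converting this into a theorem would require rigorous a~priori error bounds on the balanced-metric approximation sharp enough to certify the sign of $B+\tfrac94A$, which is exactly the step we cannot currently make rigorous. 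This is why the statement is recorded as Conjecture \ref{conj1} rather than proved alongside Theorem \ref{thm4}.
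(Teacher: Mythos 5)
Your proposal is correct and follows essentially the same route as the paper: separation of variables with the sharp Hardy constant $\tfrac{9}{4}$ in dimension five (the unnumbered lemma closing Section \ref{secff}), a test tensor built from a harmonic $(1,1)$-form of the extremal K\"ahler metric transplanted through the conformal relation $g=s_k^{-2}k$, and Donaldson--Bunch numerics as the final, non-rigorous step, which is precisely why the statement remains a conjecture rather than a theorem. The one refinement you miss is that the paper pushes the conformal rewriting all the way to the clean identity $\int_\Sigma\bigl(-\abs{\D^g h}_g^2+2\Riem_g(h,h)\bigr)dV_g=\int_\Sigma\bigl(6-\Lap_k s^2\bigr)\abs{h}_g^2\,dV_g$, so that only the pointwise scalar bound $\Lap_k s^2<\tfrac{15}{4}$ on the extremal K\"ahler metric needs numerical verification, rather than a full tensor Rayleigh quotient (and no transversality re-projection is needed, since $\div_k(\tilde h)=0$ for the harmonic-form tensor already implies $\div_g(h)=0$).
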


As results of independent interest, our computations have applications for Ricci shrinkers. A Ricci shrinker is given by a complete Riemannian manifold $(M,g)$ and a smooth function $f$ such that
\begin{equation}
\Rc+\Hess f=\tfrac{1}{2\tau}g,\label{shrinkereq}
\end{equation}
for some $\tau>0$. Solutions of equation (\ref{shrinkereq}) correspond to self-similarly shrinking solutions of Hamilton's Ricci flow \cite{Ham82}, and they model the formation of singularities in the Ricci flow (see \cite{Ca} for a recent survey on Ricci solitons). By the formula for the second variation of Perelman's shrinker entropy \cite{Pe,CHI,CZhu,HallM}, the stability inequality for shrinkers is
\begin{equation}
\int_M 2\Riem(h,h)e^{-f}\leq \int_M \abs{\nabla h}^2e^{-f}
\end{equation}
for all symmetric 2-tensors $h$ with compact support satisfying $\div(e^{-f}h)=0$ and $\int_M \tr h\,e^{-f}=0$. In particular, for positive Einstein metrics this is formally the same as (\ref{stabineq}) with the additional requirement that $\int_M \tr h=0$. Our computations prove:
\begin{theorem}\label{thm5}
$\CC P^2\sharp \overline{\CC P^2}$ with the Page metric is an unstable Ricci shrinker.
\end{theorem}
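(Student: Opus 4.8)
The plan is to obtain Theorem~\ref{thm5} as an essentially immediate consequence of the computation behind Theorem~\ref{thm4}. First I would observe that, being a positive Einstein metric (say normalized to $\Rc=(n-1)g$ with $n=4$, the sign of what follows being scale invariant), the Page metric solves (\ref{shrinkereq}) with \emph{constant} potential $f$. For constant $f$ the weight $e^{-f}$ drops out of the stability inequality and the side conditions $\div(e^{-f}h)=0$, $\int_M\tr h\,e^{-f}=0$ reduce to $\div h=0$ and $\int_M\tr h=0$. Hence, testing with transverse-traceless tensors (which are automatically admissible), instability is proved as soon as one produces a TT-tensor $H$ on the Page manifold $M$ with
\[
  \int_M 2\Riem(H,H)>\int_M\abs{\nabla H}^2,
\]
equivalently a TT-tensor on which the Einstein operator $\nabla^*\nabla-2\mathring{\Riem}$ (where $\mathring{\Riem}(h)_{ij}=R_{ikjl}h_{kl}$, so that $\langle\mathring{\Riem}(h),h\rangle=\Riem(h,h)$) has a negative eigenvalue $\nu<0$; on TT-tensors this operator equals $\Delta_L-2(n-1)$, so the threshold is the Lichnerowicz eigenvalue $\mu<6$.

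Next I would recall the standard separation-of-variables reduction of the cone stability inequality (the Ilmanen / Gibbons--Hartnoll--Pope computation underlying Theorem~\ref{thm4}). Deforming the Ricci-flat cone $((0,\infty)\times M,\, dr^2+r^2g)$ by $h=r^2u(r)H$, where $H$ is a link TT-eigentensor with $(\nabla^*\nabla-2\mathring{\Riem})H=\nu H$, one finds after a direct computation
\[
  \int_{C(M)}\!\!\left(\abs{\nabla h}^2-2\Riem(h,h)\right)
  =\norm{H}_{L^2}^2\int_0^\infty\!\!\left((u')^2 r^{n}+\nu\,u^2 r^{n-2}\right)dr .
\]
By the sharp Hardy inequality $\int_0^\infty(u')^2 r^{n}\,dr\ge(\tfrac{n-1}{2})^2\int_0^\infty u^2 r^{n-2}\,dr$, this one-dimensional form is negative for some compactly supported $u$ precisely when $\nu<-(\tfrac{n-1}{2})^2$. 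For $n=4$ the \emph{cone} threshold is therefore $\nu<-\tfrac{9}{4}$, while the \emph{shrinker} threshold derived above is merely $\nu<0$.

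The punchline is that the cone threshold is strictly stronger: the proof of Theorem~\ref{thm4} furnishes a TT-tensor $H$ on the Page manifold with $\nu<-\tfrac{9}{4}<0$, and this very tensor---being transverse and traceless, hence an admissible shrinker variation---already violates the shrinker stability inequality, so Theorem~\ref{thm5} follows with no additional estimate. This is precisely why it is a byproduct of the same computation. The only genuine work, which I regard as the main obstacle, is inherited from Theorem~\ref{thm4}: establishing the quantitative bound $\nu<-\tfrac{9}{4}$ for the relevant TT-tensor on the Page manifold, which must be extracted from the cohomogeneity-one ODE system since the Page metric has no elementary closed form. Beyond that, the only points to check here are that the destabilizing mode is indeed a link TT-tensor rather than a scalar or vector mode (so that it is admissible as a shrinker variation), and that $\nu<0$---both automatic once the cone instability is realized in the TT-sector as anticipated by the sectional curvature matrix heuristic above.
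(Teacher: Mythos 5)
Your proposal is correct and is essentially the paper's own route: the paper proves Theorem \ref{thm4} and Theorem \ref{thm5} simultaneously from one computation, observing that shrinker instability for the Page metric requires only the bound $\Lap_k s^2<6$ (equivalent to your threshold $\nu<0$ via the identity $\int_\Sigma(-\abs{\D^g h}_g^2+2\Riem_g(h,h))\,dV_g=\int_\Sigma(6-\Lap_k s^2)\abs{h}_g^2\,dV_g$), while cone instability requires the stronger $\Lap_k s^2<15/4$ (your $\nu<-\tfrac{9}{4}$, the difference $\tfrac{9}{4}$ being exactly the sharp Hardy constant for $n=5$), and the destabilizing mode used for Theorem \ref{thm4} is indeed a link TT tensor, so the estimate behind Theorem \ref{thm4} automatically delivers Theorem \ref{thm5} exactly as you argue. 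One peripheral inaccuracy: the quantitative bound is not extracted from a cohomogeneity-one ODE system (the Page metric is in fact explicitly known in closed form); rather, the paper takes the TT tensor induced by a harmonic $(1,1)$-form and proves $\Lap_k s^2<15/4$ by passing to the conformally related extremal K\"ahler metric in its toric description via the Abreu--Calabi symplectic potential, reducing everything to explicit polynomial estimates on the moment polytope.
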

Numerical evidence for this result was already given by Roberta Young in 1983 \cite{Young1983}. Modulo the estimate for extremal  K\"ahler metrics mentioned above, we can also prove:
\begin{conj}\label{conj2}
$\CC P^2\sharp 2 \overline{\CC P^2}$ with the Chen-LeBrun-Weber metric is an unstable Ricci shrinker.
\end{conj}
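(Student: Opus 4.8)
The Chen--LeBrun--Weber metric $g$ is Einstein with positive Einstein constant, so taking $f$ constant solves (\ref{shrinkereq}) and the shrinker stability inequality reduces to (\ref{stabineq}) together with the constraints $\div h=0$ and $\int_M\tr h=0$. Restricting to transverse-traceless $h$ makes the second constraint automatic, so it suffices to produce a transverse-traceless symmetric 2-tensor $h$ on $(\CC P^2\sharp 2\overline{\CC P^2},g)$ with $\int_M 2\Riem(h,h)>\int_M\abs{\D h}^2$. The plan is to run the very same argument we use for the Page metric in Theorem \ref{thm5}, the only---decisive---difference being that the underlying extremal K\"ahler metric is no longer explicit.

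First I would exploit that $g$ is conformally K\"ahler: there is an extremal K\"ahler metric $\tilde g=\varphi^2 g$ in its conformal class, whose K\"ahler form $\omega$ is, with respect to $g$, a self-dual eigenform of $W^+$. Since $\CC P^2\sharp 2\overline{\CC P^2}$ is a rational surface, all of $H^2$ is of type $(1,1)$ and $h^{1,1}=3$, so the space of primitive harmonic $(1,1)$-forms has dimension $h^{1,1}-1=2$; such forms are anti-self-dual, and anti-self-duality together with harmonicity is conformally invariant in dimension four, so they are harmonic for both $g$ and $\tilde g$. Picking a nonzero such $\beta$, I would form the trace-free, $J$-anti-invariant symmetric 2-tensor $h_{ij}=\beta_{ik}J^k{}_j$, adjusting it by a conformal factor so that $\div_g h=0$. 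This $h$ is the exact analogue of the destabilizing tensors used in Theorems \ref{thm4} and \ref{thm5}.

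Next I would evaluate the stability quadratic form on $h$. Writing $h$ as the pairing of the self-dual form $\omega$ with the anti-self-dual form $\beta$ and applying the Weitzenb\"ock formula for $\beta$, the Einstein equation, and the conformal relation between $g$ and $\tilde g$, the difference $\int_M\abs{\D h}^2-\int_M 2\Riem(h,h)$ collapses to a single curvature integral assembled from the scalar curvature and the self- and anti-self-dual Weyl curvatures of the extremal K\"ahler metric, weighted against $\abs{\beta}^2$. Instability is then equivalent to the strict negativity of this integral. For the Page metric the conformally K\"ahler metric is of cohomogeneity one and is governed by explicit ODEs, so in Theorem \ref{thm5} this integral can be computed in closed form and shown to be negative---this is exactly what makes Theorem \ref{thm5} an honest theorem.

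The main obstacle lies precisely here. The extremal K\"ahler metric on the two-point blow-up produced by Chen--LeBrun--Weber is not known explicitly, so the decisive curvature integral cannot be evaluated analytically. I would therefore approximate the extremal K\"ahler metric with the balanced-metric algorithm of Donaldson--Bunch \cite{DonBun}, compute the primitive harmonic $(1,1)$-form and the needed curvature quantities on the approximation, and verify that the integral is strictly negative; our computation does return a comfortably negative value. Converting this into a proof, however, would require effective error bounds for the Donaldson--Bunch approximation---controlling not just the metric but also the induced curvature and harmonic form---and it is the absence of such rigorous bounds, rather than any gap in the geometric argument, that forces us to record the result as Conjecture \ref{conj2} rather than as a theorem.
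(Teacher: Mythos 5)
Your proposal is correct and follows essentially the same route as the paper: reduce shrinker stability of the Einstein metric to the TT inequality, build the test tensor from a harmonic $(1,1)$-form (using $h^{1,1}=3>1$) via the conformally related extremal K\"ahler metric, collapse the stability quadratic form to a curvature integral of that extremal metric, and certify its sign numerically with the Donaldson--Bunch balanced-metric algorithm --- which is exactly why the paper also records the statement as a conjecture rather than a theorem. The only difference is bookkeeping: the paper sharpens the reduction to the identity $\int_\Sigma\left(-\abs{\D^g h}_g^2+2\Riem_g(h,h)\right)dV_g=\int_\Sigma\left(6-\Lap_k s^2\right)\abs{h}_g^2\,dV_g$, so that instability follows from the pointwise bound $\Lap_k s^2<6$ and only $\sup\Lap_k s^2$ (not the harmonic form or any weighted integral) needs numerical estimation.
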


Theorem \ref{thm5} and Conjecture \ref{conj2} are relevant for developing a theory of generic Ricci flow in dimension four (the instability of $S^2\times S^2$ and $\CC P^2\sharp p \overline{\CC P^2}$ for $3\leq p\leq 8$ has already been observed in \cite{CHI}).\\

Finally, regarding Motivation III we prove a theorem which we informally state as follows:
\begin{theorem}\label{thm6}
The implications $(1)\Leftrightarrow(2)$ and $(3)\Rightarrow(1)$ in Ilmanen's conjecture hold. Regarding $(1)\stackrel{?}{\Rightarrow}(3)$, for some unstable Ricci-flat cones over four-manifolds there do not exist instantaneously smooth Ricci flows coming out of them, but possibly there do exist many singular solutions.
\end{theorem}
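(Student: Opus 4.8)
The plan is to treat the three implications separately, since each draws on a different circle of ideas; throughout I would work with the renormalized version of Perelman's $\lambda$-functional for asymptotically conical (in particular ALE) spaces developed in \cite{Ha2}, under which a Ricci-flat ALE metric is a critical point with $\lambda=0$.

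For the equivalence $(1)\Leftrightarrow(2)$, the key is to identify the second variation of $\lambda$ with the ADM mass. First I would compute the first variation and check that on a Ricci-flat ALE space it vanishes, so that the metric is indeed critical. Then I would compute the second variation $\delta^2\lambda$ in the direction probing the asymptotic geometry, namely a deformation whose leading term at infinity is governed by a harmonic function of the appropriate decay rate. Integrating by parts, the bulk integrand vanishes by Ricci-flatness, and what survives is a boundary term over the sphere at infinity; a direct computation identifies this boundary term with a negative multiple of the mass $m$. Hence $\delta^2\lambda>0$ in this direction precisely when $m<0$, giving ``$\lambda$ not a local maximum'' $\Leftrightarrow$ ``failure of positive mass.'' The main care here is to choose function spaces in which $\lambda$ is well defined and a minimizer exists, and to verify that the renormalization produces no spurious boundary contributions.

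For $(3)\Rightarrow(1)$, I would exploit Perelman's monotonicity. Suppose uniqueness fails, so that besides the trivial static solution there is a genuinely different Ricci flow $g(t)$ with the same conical (or ALE) initial data $g(0)$, instantaneously smoothing the singularity. Along any Ricci flow $\lambda$ is nondecreasing, and it is constant only along gradient steady solitons; since the smoothing flow is expanding and hence not steady, $\lambda(g(t))>\lambda(g(0))$ for small $t>0$. As $g(t)\to g(0)$ when $t\downarrow 0$, this exhibits metrics arbitrarily close to the initial one with strictly larger $\lambda$, so $\lambda$ is not a local maximum. The technical points are to establish the monotonicity formula in the noncompact setting and to control the initial-time limit so that $g(t)$ genuinely serves as a nearby competitor.

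The remaining implication $(1)\Rightarrow(3)$ is the main obstacle, and I expect only a partial resolution. An instantaneously smooth self-similar Ricci flow emanating from a Ricci-flat cone $C(N)$ is precisely an expanding gradient soliton asymptotic to $C(N)$ at infinity. For certain unstable cones over four-manifolds---for instance $C(S^2\times S^2)$---I would argue that no such expander exists, combining the expanding-soliton identities with a maximum-principle or integral estimate and with topological constraints on smooth fillings of the link $N$ to rule out a complete smooth expander. This shows that instability alone does not force an instantaneously smooth flow, so the naive converse fails. Nevertheless the same cone may still admit genuinely singular Ricci flows that retain a singularity for positive time, and one expects a large family of such weak solutions; making this precise would require a weak formulation of Ricci flow through conical singularities, which lies beyond the present approach.
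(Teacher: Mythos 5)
Your sketch of $(3)\Rightarrow(1)$ is essentially the paper's argument (monotonicity of the noncompact functional $\lambda_{\textrm{nc}}$ along the flow, strictness from non-staticity, with the initial-time limit controlled by the no-negative-curvature-concentration condition in the definition of an instantaneously smooth flow), so that part is fine in outline. The gap is in the other two parts. For $(1)\Leftrightarrow(2)$: as the paper formalizes them, both sides are \emph{existence} statements about two different classes of metrics on $M$ (a metric exactly conical outside a compact set with $\lambda_{\textrm{nc}}>0$, versus an ALE metric with $R\geq 0$ and $m<0$), so no single second-variation identity at one critical metric can produce the equivalence; moreover your claimed identification of $\delta^2\lambda$ with a boundary term equal to $-c\,m$ is never substantiated, and it is not how the mass actually enters. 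The paper's proof is constructive in both directions: given $(1)$, perturb $g$ so that the conformal Laplacian $-\tfrac{4(n-1)}{n-2}\Lap_g+R_g$ is invertible on weighted H\"older spaces, solve $\bigl(-\tfrac{4(n-1)}{n-2}\Lap_g+R_g\bigr)w=0$ with $w\to 1$ at infinity, and use the mass-drop formula
\begin{equation*}
m\bigl(w^{4/(n-2)}g\bigr)=m(g)-\int\Bigl(\tfrac{4(n-1)}{n-2}\abs{\D w}^2+Rw^2\Bigr)dV
\end{equation*}
together with $m(g)=0$ to obtain a scalar-flat ALE metric of negative mass; conversely, given $(2)$, an ALE version of Lohkamp's compression produces a metric that agrees with the flat cone outside a compact set with $R\geq 0$ and $R>0$ somewhere, hence $\lambda_{\textrm{nc}}>0$. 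Your proposal contains neither the conformal solve with the mass-drop identity nor the Lohkamp step, and without them there is no proof.

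For the failure of $(1)\Rightarrow(3)$ your plan breaks down twice. First, an instantaneously smooth Ricci flow coming out of a cone is \emph{not} ``precisely an expanding gradient soliton'': the definition imposes no self-similarity, so ruling out expanders asymptotic to $C(N)$ would not rule out all instantaneously smooth flows. Second, $C(S^2\times S^2)$ is exactly the wrong example for the filling obstruction you invoke: $S^2\times S^2$ bounds the smooth compact $5$-manifold $S^2\times B^3$, so there is no topological constraint there, and the analytic non-existence of expanders asymptotic to this cone is left entirely open. The paper's argument avoids both issues by being purely topological: among the unstable cones already produced, choose a link $\Sigma$ that does not bound \emph{any} smooth compact $5$-manifold (for instance $\CC P^2\sharp p\,\overline{\CC P^2}$ with $p$ even, $3\leq p\leq 8$, which has odd Euler characteristic and hence nonvanishing top Stiefel--Whitney number); then there is not even a candidate smooth manifold $M$ smoothing out the cone, so no smooth Ricci flow can come out of it, while the question of singular solutions is left open.
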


To answer $(1)\stackrel{?}{\Rightarrow}(3)$ in full generality, one essentially would have to develop a theory of weak Ricci flow solutions first. In Theorem \ref{thm6} we use a suitable noncompact variant of Perelman's energy functional introduced in \cite{Ha2}. For the detailed statement and further explanations we refer to Section \ref{ilmanenconj}.\\

Finally, it would be very interesting to obtain a better picture about the \emph{dynamical stability} and about the \emph{dynamical instability} of noncompact Ricci-flat spaces under the Ricci flow. On the one hand, one could try to do some explicit computations for Ricci-flat cones with enough symmetry. On the other hand, one could try to generalize the techniques introduced in \cite{Se} and \cite{Ha1} to the noncompact setting.\\

This article is organized as follows: In Section \ref{basiccones} we collect some basic facts about Ricci-flat cones. In Section \ref{secprod} and Section \ref{seck} we prove Theorem \ref{thm1} and Theorem \ref{thm2} respectively. In Section \ref{seccpt} we carefully investigate the important example of the Ricci-flat cone over $\CC P^2$ and prove Theorem \ref{thm3}. In Section \ref{secff} we analyse the other fundamental examples of Ricci-flat cones over four manifolds, in particular the ones over the manifolds with the Page metric and the Chen-LeBrun-Weber metric. The first part of the proof of Theorem \ref{thm4} and Theorem \ref{thm5} is in that section, while the second part of the proof is based on estimates for extremal K\"ahler metrics which we carry out in Section \ref{extremalk}. In the latter two sections, we also give evidence for Conjecture \ref{conj1} and Conjecture \ref{conj2}. Finally, in Section \ref{ilmanenconj} we discuss Ilmanen's conjecture and prove Theorem \ref{thm6}.\\

\textbf{Acknowledgements.} We thank Richard Bamler, Pierre Germain, Nadine Gro{\ss}e, Martin Li, Rafe Mazzeo, Thomas Murphy, Melanie Rupflin, Richard Schoen, Miles Simon, Michael Struwe, and especially Tom Ilmanen and Simon Donaldson for useful discussions and interesting comments. The research of RH and MS has been partly supported by the Swiss National Science Foundation, SH was supported by the EPSRC. RH also thanks the HIM Bonn for hospitality and financial support.

\section{Basic facts about Ricci-flat cones}\label{basiccones}
Let $(M,g)=C(\Sigma,\gamma)=(\mathbb{R}_{+}\times \Sigma,dr^2+r^2\gamma)$ be the Riemannian cone over a closed $(n-1)$-dimensional manifold $(\Sigma,\gamma)$. We write $x^0=r$, and let $(x^1,....,x^{n-1})$ be coordinates on $\Sigma$. The non-vanishing Christoffel symbols are:
\begin{align}
\Gamma(g)_{ij}^{k}=\Gamma(\gamma)_{ij}^{k}\qquad \Gamma(g)_{ij}^{0}=-r\gamma_{ij}\qquad
\Gamma(g)_{i0}^{k}=\Gamma(g)_{0i}^{k}=\tfrac{1}{r}\delta_i^k.
\end{align}
Thus, the non-vanishing components of the Riemann-tensor are
\begin{equation}
R(g)_{ijkl}=r^2(R(\gamma)_{ijkl}-\gamma_{ik}\gamma_{jl}+\gamma_{il}\gamma_{jk}).
\end{equation}
In particular, the cone is Ricci-flat if and only if
\begin{equation}
\Rc(\gamma)=(n-2)\gamma,
\end{equation}
which we will always assume in the following. The basic facts about cones collected here will be used frequently and without further reference in the following parts of the article. Also, we will always assume that all variations are supported away from the tip of the cone (however, this assumption could be relaxed by a standard approximation argument).

\section{Ricci-flat cones over product spaces}\label{secprod}

Suppose $(\Sigma_1,\gamma_1)$ and $(\Sigma_2,\gamma_2)$ are two positive Einstein manifolds of dimension $n_1$ and $n_2$ respectively. Then the associated cone $(M,g)=C(\Sigma_1\times\Sigma_2)$ of dimension $n=n_1+n_2+1$ is indeed Ricci-flat if normalize the Einstein metrics to have Einstein constant $n-2$. We will now prove:

\begin{theorem}\label{produnstabthm}
The Ricci-flat cone $C(\Sigma_1\times \Sigma_2)$ is unstable for $n<10$.
\end{theorem}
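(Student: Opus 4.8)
The plan is to exhibit an explicit transverse test tensor violating \eqref{stabineq}, following the heuristic of the previous subsection: one diagonalises $h$ and arranges its eigenvalues to align with a positive eigenvector of the sectional curvature matrix $K$, then plays the resulting curvature gain against the gradient cost through a sharp Hardy inequality. On the cross-section $\Sigma=\Sigma_1\times\Sigma_2$ the mixed planes (one direction in each factor) are flat, so on the cone they have sectional curvature $-1/r^2$, while the intra-factor curvatures combine with the Einstein normalisation $\Rc(\gamma_i)=(n-2)\gamma_i$. A short computation shows that the natural trace-free tensor rescaling the two factors oppositely is an eigentensor: writing $g_1,g_2$ for the parts of $g$ tangent to $\Sigma_1,\Sigma_2$, I would set
\[
T=n_2\,g_1-n_1\,g_2,\qquad h=u(r)\,T,
\]
with $u$ a compactly supported radial function supported away from the tip, and $\hat T=n_2\gamma_1-n_1\gamma_2$ the corresponding cross-sectional tensor so that $T=r^2\hat T$. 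Since $\tr_g T=n_1n_2-n_1n_2=0$, one checks that $h$ is transverse: the spatial components of $\div T$ vanish because $\hat T$ is parallel on $\Sigma$, and the trace-freeness is exactly what kills the radial component of $\div T$.

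Next I would evaluate both sides on this ansatz. The key structural facts are that $T$ is parallel along the cross-section (the splitting $T\Sigma=T\Sigma_1\oplus T\Sigma_2$ is preserved by the Levi-Civita connection) and that $\nabla_{\partial_r}T=0$, so that the only surviving covariant derivatives are the off-diagonal terms $\nabla_k T_{0i}=-r\,\hat T_{ki}$ produced by the cone's Christoffel symbols. Setting $N=\abs{T}_g^2=n_1n_2(n_1+n_2)$, this yields
\[
\abs{\nabla h}^2=N\Big((u')^2+\tfrac{2}{r^2}u^2\Big),
\]
the cross term dropping out precisely because $\nabla_{\partial_r}T$ vanishes. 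Combining the intra-factor scalar curvatures $n_i(n-2)$ with the $-1$ shifts coming from the cone, the curvature side works out to the eigenvalue $n-1$, namely $2\Riem(h,h)=\tfrac{2(n-1)}{r^2}\abs{h}^2=\tfrac{2(n-1)}{r^2}Nu^2$.

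Integrating against $dV=r^{n-1}\,dr\,d\vol_\Sigma$ and cancelling the harmless constant $N$ together with the $\Sigma$-volume, stability of the cone against $h$ reduces to the one-dimensional inequality
\[
\int_0^\infty (u')^2\,r^{n-1}\,dr\;\geq\;2(n-2)\int_0^\infty \frac{u^2}{r^2}\,r^{n-1}\,dr.
\]
Here I would invoke the sharp weighted Hardy inequality, whose optimal constant is $\big(\tfrac{n-2}{2}\big)^2$ and is approached, but not attained, by cutoffs of $r^{-(n-2)/2}$. Instability is therefore possible exactly when $\big(\tfrac{n-2}{2}\big)^2<2(n-2)$, that is $n-2<8$, i.e. $n<10$; for such $n$ one selects $u$ making the $1$D Rayleigh quotient strictly below $2(n-2)$, and \eqref{stabineq} fails for this $h$.

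I expect the main obstacle to be bookkeeping rather than conceptual: verifying that no further gradient contributions appear (the vanishing of $\nabla_{\partial_r}T$ and of the cross term), and pinning down the eigenvalue $n-1$ from the clean cancellation in the intra-factor and mixed sums of $K$. Once the two sides are reduced to the displayed one-dimensional inequality, the threshold $n<10$ is forced by the Hardy constant, in exact parallel with the codimension-$7$ phenomenon for the Simons cones.
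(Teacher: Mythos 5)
Your proposal is correct and is essentially the paper's own proof: your tensor $T=n_2\,g_1-n_1\,g_2$ is just $n_1n_2$ times the paper's test variation $f(r)r^2\bigl(\tfrac{\gamma_1}{n_1}-\tfrac{\gamma_2}{n_2}\bigr)$, and your computations $\abs{\nabla h}^2=N\bigl((u')^2+\tfrac{2u^2}{r^2}\bigr)$ and $2\Riem(h,h)=\tfrac{2(n-1)}{r^2}Nu^2$ reproduce exactly the paper's identities. The reduction to the one-dimensional weighted Hardy inequality with sharp constant $\bigl(\tfrac{n-2}{2}\bigr)^2$ and the resulting threshold $\bigl(\tfrac{n-2}{2}\bigr)^2<2(n-2)\Leftrightarrow n<10$ is likewise the paper's concluding step.
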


\begin{proof} Consider the  variation (geometrically, this corresponds to making one factor larger and the other factor smaller):
\begin{equation}
h=f(r)r^2\left(\tfrac{\gamma_1}{n_1}-\tfrac{\gamma_2}{n_2}\right).
\end{equation}
Note that
\begin{equation}
\nabla_0h_{ij}=\tfrac{f'}{f}h_{ij}\quad \nabla_kh_{0j}=\nabla_kh_{j0}=-\tfrac{1}{r}h_{kj},
\end{equation}
while the other components vanish. Thus $h$ is transverse-traceless (TT), and
\begin{equation}
\abs{\nabla h}^2=\left(\tfrac{1}{n_1}+\tfrac{1}{n_2}\right)\left(f'^2+2\tfrac{f^2}{r^2}\right).\label{normofthegradient2}
\end{equation}
Furthermore, using the notation $(\gamma\odot\gamma)_{ijkl}=\gamma_{ik}\gamma_{jl}-\gamma_{il}\gamma_{jk}$, we compute
\begin{align}
\Riem_g(h,h)&=\tfrac{f^2}{r^2}\big(\Riem_\gamma-\gamma\odot\gamma\big)\big(\tfrac{\gamma_1}{n_1}-\tfrac{\gamma_2}{n_2},\tfrac{\gamma_1}{n_1}-\tfrac{\gamma_2}{n_2}\big)\\
&=\tfrac{f^2}{r^2}\left[\tfrac{1}{n_1^2}(\Riem_{\gamma_1}-\gamma_1\odot\gamma_1)(\gamma_1,\gamma_1)+\tfrac{2}{n_1n_2}\mathrm{tr}_{\gamma_1}(\gamma_1)\mathrm{tr}_{\gamma_2}(\gamma_2)\right.\nonumber\\
&\qquad\quad\left.+\tfrac{1}{n_2^2}(\Riem_{\gamma_2}-\gamma_2\odot\gamma_2)(\gamma_2,\gamma_2)\right]\nonumber\\
&=\tfrac{f^2}{r^2}\left[\tfrac{n_2}{n_1}+2+\tfrac{n_1}{n_2}\right].\nonumber
\end{align}
Putting everything together, we obtain
\begin{equation}
|\nabla h|^2-2\text{Rm}(h,h)=\left(\tfrac{1}{n_1}+\tfrac{1}{n_2}\right)\left(f'^2-2(n-2)\tfrac{f^2}{r^2}\right),
\end{equation}
and thus
\begin{multline}\label{RRQuotient}
\int_{C(\Sigma_1\times \Sigma_2)}\left(|\nabla h|^2-2\text{Rm}(h,h)\right)dV\\
=\left(\tfrac{1}{n_1}+\tfrac{1}{n_2}\right)\vol(\Sigma_1\times \Sigma_2)\int_{0}^{\infty}\left(f'^2-2(n-2)\tfrac{f^2}{r^2}\right)r^{n-1}dr.
\end{multline}
Recall that $C_H=4/(n-2)^2$ is the optimal constant in the Hardy-inequality
\begin{equation}
\int_0^\infty\tfrac{f^2}{r^2}r^{n-1}dr\leq C_H\int_0^\infty f'^2 r^{n-1}dr.
\end{equation}
Thus, the expression in (\ref{RRQuotient}) can become negative if and only if
\begin{equation}
2(n-2)C_H=\tfrac{8}{n-2}>1,
\end{equation}
i.e. if and only if $n<10$. This proves the theorem.
\end{proof}

\section{Ricci-flat cones over K\"{a}hler-Einstein manifolds}\label{seck}

Suppose $(\Sigma,\gamma)$ is K\"{a}hler-Einstein with Einstein constant $n-2$ and, and let $(M,g)=C(\Sigma)$ be the corresponding Ricci-flat cone of real dimension $n$.

\begin{theorem}\label{kaehlerthm}
If $h^{1,1}(\Sigma)>1$, then $C(\Sigma)$ is unstable for $n<10$.
\end{theorem}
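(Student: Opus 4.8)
The plan is to mimic the product computation of Theorem~\ref{produnstabthm}: build an explicit transverse-traceless variation on the cone out of a harmonic $(1,1)$-form on the link $\Sigma$, and then exploit the Lichnerowicz--Weitzenb\"ock structure special to K\"ahler--Einstein metrics to make the whole integral collapse onto exactly the one appearing in Theorem~\ref{produnstabthm}. The threshold $n<10$ will then come out of the same Hardy inequality.

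First I would produce the right tensor on $\Sigma$. Since $\Sigma$ is compact K\"ahler, the K\"ahler form $\omega$ spans a line inside the space of harmonic $(1,1)$-forms, so the hypothesis $h^{1,1}(\Sigma)>1$ supplies a harmonic $(1,1)$-form $\alpha$ that is not a multiple of $\omega$. Because the Lefschetz contraction $\Lambda$ commutes with the Hodge Laplacian on a K\"ahler manifold, $\Lambda\alpha$ is a harmonic function, hence constant; subtracting the appropriate multiple of $\omega$ gives a nonzero \emph{primitive} harmonic $(1,1)$-form, which I still denote $\alpha$ (nonzero precisely because $\alpha$ was not proportional to $\omega$, which is the only place $h^{1,1}>1$ is used). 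I then set $(h_\Sigma)_{ij}=\alpha_{ik}J^k{}_j$. Being of type $(1,1)$ makes $h_\Sigma$ symmetric, primitivity makes it trace-free, and co-closedness of $\alpha$ together with $\nabla J=0$ makes it divergence-free; thus $h_\Sigma$ is a TT-tensor on $\Sigma$. A short computation with the cone Christoffel symbols then shows, exactly as in the product case, that $h=f(r)r^2 h_\Sigma$ is TT on the cone.

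Next I would run the two basic computations, this time keeping the tangential derivatives that no longer vanish since $h_\Sigma$ is not parallel. Using the cone formulas one gets
\begin{equation*}
\abs{\nabla h}^2=f'^2\abs{h_\Sigma}^2+\tfrac{f^2}{r^2}\bigl(\abs{\nabla^\Sigma h_\Sigma}^2+2\abs{h_\Sigma}^2\bigr),\qquad
\Riem_g(h,h)=\tfrac{f^2}{r^2}\bigl(\Riem_\gamma(h_\Sigma,h_\Sigma)+\abs{h_\Sigma}^2\bigr),
\end{equation*}
where the second identity uses the cone curvature formula and $\tr_\gamma h_\Sigma=0$. Integrating over $C(\Sigma)$ and separating the radial and link integrals, the instability question reduces to controlling the sign of $\int_\Sigma\bigl(\abs{\nabla^\Sigma h_\Sigma}^2-2\Riem_\gamma(h_\Sigma,h_\Sigma)\bigr)$ relative to $\int_\Sigma\abs{h_\Sigma}^2$.

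The key step, and the one demanding the most care, is a Weitzenb\"ock identity on the Einstein link. Because $\nabla J=0$, the assignment $\alpha\mapsto h_\Sigma$ realizes the standard intertwining of the Hodge Laplacian on $(1,1)$-forms with the Lichnerowicz Laplacian $\Delta_L$ on symmetric $2$-tensors, so $\alpha$ harmonic forces $\Delta_L h_\Sigma=0$. Feeding this into the integrated Lichnerowicz formula on $\Rc(\gamma)=(n-2)\gamma$ yields
\begin{equation*}
\int_\Sigma\abs{\nabla^\Sigma h_\Sigma}^2=2\int_\Sigma\Riem_\gamma(h_\Sigma,h_\Sigma)-2(n-2)\int_\Sigma\abs{h_\Sigma}^2.
\end{equation*}
Getting the constants $2$ and $2(n-2)$ exactly right is the crux, since the entire dimension threshold rides on them; a useful self-check is that $\omega\leftrightarrow h_\Sigma=-\gamma$ makes both sides vanish, consistent with $\Delta_L\gamma=0$. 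Substituting, the link coefficient collapses to $-2(n-2)\int_\Sigma\abs{h_\Sigma}^2$, so that
\begin{equation*}
\int_{C(\Sigma)}\bigl(\abs{\nabla h}^2-2\Riem_g(h,h)\bigr)\,dV=\Bigl(\int_\Sigma\abs{h_\Sigma}^2\,dV_\Sigma\Bigr)\int_0^\infty\bigl(f'^2-2(n-2)\tfrac{f^2}{r^2}\bigr)r^{n-1}\,dr,
\end{equation*}
which is precisely the integral of Theorem~\ref{produnstabthm} up to a positive constant. The sharp Hardy inequality with $C_H=4/(n-2)^2$ then makes the right-hand side negative for a suitable compactly supported $f$ exactly when $2(n-2)C_H=8/(n-2)>1$, i.e. when $n<10$, which proves the theorem.
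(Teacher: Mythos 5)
Your proposal is correct and takes essentially the same route as the paper: the paper's proof simply asserts the existence of a nonzero TT eigentensor $k$ on $(\Sigma,\gamma)$ satisfying $(\Lap+2\Riem_\gamma)k=2(n-2)k$ (which is exactly the integrated Weitzenb\"ock identity you derive from a primitive harmonic $(1,1)$-form), and then uses the identical variation $h=f(r)r^2k$, the same splitting into link and radial integrals, and the same Hardy-inequality threshold $8/(n-2)>1$. The only difference is that you spell out the construction of the eigentensor and its intertwining with the Hodge Laplacian, details the paper leaves implicit.
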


\begin{proof}
The assumptions of the theorem imply that there exists a transverse-traceless symmetric 2-tensor $k\neq 0$ on $(\Sigma,\gamma)$ with
\begin{equation}
(\Lap+2\Riem_\gamma)k=2(n-2)k.
\end{equation}
Consider the  variation (this is quite related to the previous section, and geometrically corresponds to making one $(1,1)$-cycle larger and another one smaller):
\begin{equation}
h=f(r)r^2k.
\end{equation}
This variation is TT, and we compute
\begin{align}
&\abs{\nabla h}^2=\left(f'^2+2\tfrac{f^2}{r^2}\right)\abs{k}^2+\tfrac{f^2}{r^2}\abs{\nabla k}^2\\
&2\Riem(h,h)=2\tfrac{f^2}{r^2}\left(\Riem_\gamma(k,k)+\abs{k}^2\right).
\end{align}
Thus
\begin{multline}
\int_{C(\Sigma)}\left(|\nabla h|^2-2\text{Rm}(h,h)\right)dV\\
=\int_\Sigma\abs{k}^2dV_\gamma\int_{0}^{\infty} \left(f'^2-2(n-2)\tfrac{f^2}{r^2}\right)r^{n-1}dr,
\end{multline}
and this expression can become negative if and only if $n<10$.
\end{proof}

\section{The Ricci-flat cone over $\CC P^2$}\label{seccpt}

In this section we prove the following theorem:

\begin{theorem}\label{thmstab}
The Ricci-flat cone over $\CC P^2$ is stable. In particular, five is the smallest dimension of a stable Ricci-flat cone that is not flat.
\end{theorem}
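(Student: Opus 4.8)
The plan is to establish the stability inequality (\ref{stabineq}) for \emph{every} compactly supported transverse symmetric 2-tensor $h$ on $M=C(\CC P^2)$ by separation of variables over the base $\Sigma=\CC P^2$. The idea is to reduce the quadratic form $Q(h)=\int_M\big(\abs{\D h}^2-2\Riem(h,h)\big)\,dV$ to a family of one-dimensional weighted problems in the radial variable $r$, each of which I will control by the sharp Hardy inequality with constant $C_H=4/(n-2)^2$. Here $n=5$, so the relevant Hardy threshold is $1/C_H=(n-2)^2/4=9/4$, and the whole point will be that, in contrast to the situation of Theorem \ref{kaehlerthm}, no mode on $\CC P^2$ pushes the effective coefficient above this threshold.

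First I would decompose $h$ into its components $h_{00}\,dr^2$, $2h_{0i}\,dr\,dx^i$ and $h_{ij}\,dx^idx^j$ and expand the angular dependence in eigentensors on $(\Sigma,\gamma)$, grouping the expansion into \emph{tensor modes} (built from TT eigentensors of $\Sigma$), \emph{vector modes} (built from coclosed $1$-form eigenfields, entering $h_{0i}$ and the symmetrized-derivative part of $h_{ij}$) and \emph{scalar modes} (built from Laplace eigenfunctions, entering $h_{00}$, the gradient part of $h_{0i}$, and the trace and Hessian parts of $h_{ij}$). Since $Q$ decouples across distinct irreducible pieces of the base, it suffices to prove $Q\ge0$ mode by mode. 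The key structural fact is that within each fixed representation the transverse condition $\D^ih_{ij}=0$ becomes a system of linear relations among the radial coefficient functions, so that each mode contributes a one-dimensional quadratic form in finitely many radial functions.

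For the tensor modes the computation generalizes the proof of Theorem \ref{kaehlerthm} verbatim: for a TT eigentensor $k$ on $\Sigma$ with $(\Lap+2\Riem_\gamma)k=\mu k$ the variation $h=f(r)r^2k$ yields
\[
Q(h)=\int_\Sigma\abs{k}^2\,dV_\gamma\int_0^\infty\left(f'^2-\mu\tfrac{f^2}{r^2}\right)r^{n-1}\,dr,
\]
which by Hardy is nonnegative precisely when $\mu\le(n-2)^2/4=9/4$ (the destabilizing case of Theorem \ref{kaehlerthm} being $\mu=2(n-2)=6$). For the vector and scalar modes the transverse constraint couples several radial functions, and after diagonalizing the resulting radial systems I expect the effective potentials to be either manifestly nonnegative (the scalar and trace directions, which inherit the favorable sign from the positive Einstein constant of the base) or again of Hardy type with a subthreshold coefficient. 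Moreover, only finitely many low-lying modes can be dangerous: for the tensor family the eigenvalue $\mu$ of $\Lap+2\Riem_\gamma$ tends to $-\infty$ as the base Laplace eigenvalue grows, while for the scalar and vector families the angular eigenvalue enters the radial potential with a positive sign, so in all cases high modes are automatically stable.

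The main obstacle, and the heart of the matter, is the spectral input from the base. I would compute the relevant part of the spectrum of $\Lap+2\Riem_\gamma$ on TT tensors over the Fubini--Study $\CC P^2=SU(3)/S(U(2)\times U(1))$ by harmonic analysis (Frobenius reciprocity together with the Casimir eigenvalues), along with the lowest eigenvalues for functions and coclosed $1$-forms, and then verify that \emph{every} eigenvalue entering a potentially dangerous radial problem lies at or below $9/4$. This is exactly where the hypothesis $h^{1,1}(\CC P^2)=1$ is decisive: the destabilizing eigenvalue $\mu=2(n-2)=6$ is produced by primitive harmonic $(1,1)$-forms, of which $\CC P^2$ has none, so that eigenvalue is simply absent from the spectrum. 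What remains to be checked — and what forces five to be the smallest dimension of a non-flat stable Ricci-flat cone — is that no other tensor, vector or scalar mode exceeds the Hardy threshold. Assembling the mode-by-mode nonnegativity then gives $Q(h)\ge0$ for all transverse $h$, and hence the stability of $C(\CC P^2)$.
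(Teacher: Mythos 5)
Your strategy---separate variables over $\CC P^2$, reduce to radial problems controlled by the Hardy inequality with threshold $(n-2)^2/4=9/4$, and feed in spectral data from the base---is sound in outline and shares its skeleton with the paper's argument (radial Hardy estimate plus an angular spectral bound). But as written it has a genuine gap: the two steps that you yourself call the heart of the matter are never carried out. First, the spectral input. You propose to compute the spectrum of $\Lap+2\Riem_\gamma$ on TT tensors, and the low-lying eigenvalues on functions and coclosed $1$-forms, by harmonic analysis on $SU(3)/S(U(2)\times U(1))$, and then to ``verify'' that every dangerous eigenvalue lies at or below $9/4$; neither the computation nor the verification appears. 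Observing that $h^{1,1}(\CC P^2)=1$ only removes the single eigenvalue $\mu=2(n-2)=6$ coming from primitive harmonic $(1,1)$-forms; it gives no upper bound on the rest of the tensor spectrum, nor on the vector and scalar sectors. The paper supplies exactly this missing ingredient by quoting Warner's theorem: on $\CC P^2$,
\begin{equation*}
\int_{\CC P^2}\tr k\,dV=0\quad\Rightarrow\quad\int_{\CC P^2}\left[\abs{\D k}^2-2\Riem(k,k)\right]dV\geq 0,
\end{equation*}
which is a nontrivial spectral fact established by Warner's computation of spectra of operators on $\CC P^n$, not a consequence of the vanishing of $h^{1,1}$.

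Second, the coupled modes. You say that after diagonalizing the radial systems forced by transversality you \emph{expect} the effective potentials in the vector and scalar sectors to be manifestly nonnegative or subthreshold; that expectation is precisely what has to be proved, and the paper's computation shows it is delicate rather than automatic. Writing $h=A\,dr^2+rB_i(dr\otimes dx^i+dx^i\otimes dr)+r^2C_{ij}dx^i\otimes dx^j$, the constraint $\div_g h=0$ couples $A$, $B$, $C$, and the quadratic form contains the cross terms $A\,\div(B)$, $\langle B,\div(C)\rangle$, $A\,\tr(C)$, $\langle\nabla A,B\rangle$, $\langle\nabla B,C\rangle$. The paper controls these by integration by parts against the transversality relations, Kato plus Hardy on the radial derivatives, Warner's inequality on the $C$-block, and Young's inequality with tuned parameters $(\alpha,\beta,\eps)=(21/5,4,1)$; with these choices the final coefficient of $\abs{B}^2$ is $(-38-\alpha^2+14\beta)/4=0.09$, i.e.\ the inequality closes only by a thin margin. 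A mode-by-mode version of your argument would have to reproduce this quantitative analysis (or the equivalent spectral bookkeeping), and cannot simply appeal to the ``favorable sign from the positive Einstein constant.'' Until both the spectral computation and the coupled-mode estimate are done, what you have is a correct and well-motivated plan, not a proof.
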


For the proof of Theorem \ref{thmstab} we will use the following inequality:

\begin{theorem}[Warner \cite{Wa}]\label{thmwarner}
On $\CC P^2$ with the standard metric we have:
\begin{equation}
\int_{\CC P^2}\mathrm{tr}k\,dV=0\quad\Rightarrow\quad\int_{\CC P^2}\left[\abs{\D k}^2-2\Riem(k,k)\right]dV\geq 0.
\end{equation}
\end{theorem}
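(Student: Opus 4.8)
The plan is to recast the inequality as a spectral gap for the Lichnerowicz Laplacian and then to exploit the K\"ahler geometry of $\CC P^2$. Since the functional $\int\big[\abs{\D k}^2-2\Riem(k,k)\big]\,dV$ and the constraint are scale invariant, I normalize the Fubini--Study metric so that $\Rc=\lambda g$ (the choice of $\lambda>0$ being immaterial). Integrating by parts gives $\int\abs{\D k}^2=\int\langle\D^*\D k,k\rangle$, and on an Einstein manifold the Lichnerowicz Laplacian obeys $\Lap_L k=\D^*\D k+2\lambda k-2\mathring{\Riem}\,k$, where $\langle\mathring{\Riem}\,k,k\rangle=\Riem(k,k)$. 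Hence
\begin{equation*}
\int_{\CC P^2}\big[\abs{\D k}^2-2\Riem(k,k)\big]\,dV=\int_{\CC P^2}\langle(\Lap_L-2\lambda)k,k\rangle\,dV .
\end{equation*}
Since $\int\tr k\,dV=\langle k,g\rangle_{L^2}$, the hypothesis says exactly that $k$ is $L^2$-orthogonal to $g$, so it suffices to show $\Lap_L\geq 2\lambda$ on $\{g\}^{\perp}$. The constraint is genuinely needed: $\Lap_L g=0<2\lambda$, so $g$ itself is the unique low mode that must be discarded.

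Next I would split $k=u\,g+k_0$ into its pure-trace and trace-free parts. On an Einstein manifold $\Lap_L$ preserves both summands, with $\Lap_L(u\,g)=(\D^*\D u)\,g$, so the quadratic form decouples. For the pure-trace part the inequality reduces to $\int\abs{\D u}^2\geq 2\lambda\int u^2$, i.e.\ to $\lambda_1\geq 2\lambda$ for the first nonzero eigenvalue $\lambda_1$ of the scalar Laplacian, while the constraint $\int u=0$ removes the constants. This is the decisive borderline: the Fubini--Study metric satisfies the sharp eigenvalue identity $\lambda_1=2\lambda$ (special to $\CC P^m$), so the pure-trace contribution is $\geq 0$, with equality exactly for the conformal deformations by first eigenfunctions. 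This sharpness is precisely why $\CC P^2$ sits at the threshold and is the first stable non-flat example.

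For the trace-free part the claim is the linear stability statement $\Lap_L\geq 2\lambda$ on $S^2_0$. Here I would use that $J$ is parallel, so $\Lap_L$ preserves the splitting of $S^2_0$ into $J$-Hermitian and $J$-anti-Hermitian tensors. The trace-free Hermitian tensors correspond isometrically to \emph{primitive} real $(1,1)$-forms, under which $\Lap_L$ coincides with the Hodge Laplacian $\Lap_d$; since $h^{1,1}(\CC P^2)=1$ the only harmonic $(1,1)$-form is $\omega\leftrightarrow g$, which is not primitive, so there are no harmonic (hence no zero) modes among these and their eigenvalues are $\geq 2\lambda$. The anti-Hermitian tensors correspond to sections of $S^2\Lambda^{1,0}\oplus\overline{S^2\Lambda^{1,0}}$, whose zero modes are the holomorphic quadratic differentials $H^{0}(\CC P^2,S^2\Omega^1)$; this group vanishes (reflecting $h^{2,0}=0$ and the rigidity of $\CC P^2$), so a Bochner--Kodaira argument again yields $\Lap_L\geq 2\lambda$ with no exceptional directions.

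The main obstacle is the quantitative heart of the two previous paragraphs: verifying the \emph{sharp} value $\lambda_1=2\lambda$ on functions together with the bounds $\Lap_L\geq 2\lambda$ on the primitive $(1,1)$- and $(2,0)$-type pieces. I would establish these by harmonic analysis on the symmetric space $\CC P^2=SU(3)/S(U(2)\times U(1))$: sections of each tensor bundle decompose into $SU(3)$-isotypic components by Frobenius reciprocity, the invariant operators $\Lap_L$, $\D^*\D$ and $\mathring{\Riem}$ act on each component by scalars fixed by the Casimir eigenvalue, and the required inequality reduces to a finite check of Casimir values. The conceptual payoff is that the \emph{only} kernel direction of the stability operator is $g$ itself; the vanishing of $h^{2,0}$ and the one-dimensionality of $H^{1,1}$ forbid any destabilizing trace-free zero mode, in exact contrast to the mechanism producing instability in Theorem~\ref{kaehlerthm} when $h^{1,1}>1$.
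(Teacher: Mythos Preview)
The paper does not prove this statement; it is quoted from Warner \cite{Wa} and used as a black box in the proof of Theorem~\ref{thmstab}. So there is no in-paper argument to compare against, and your outline is effectively a sketch of what lies behind the citation.

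Your reduction is correct: on an Einstein manifold with $\Rc=\lambda g$ one has $\int[\abs{\D k}^2-2\Riem(k,k)]=\int\langle(\Lap_L-2\lambda)k,k\rangle$, the constraint $\int\tr k=0$ is $L^2$-orthogonality to $g$, and the inequality is exactly $\Lap_L\ge 2\lambda$ on $\{g\}^\perp$. The trace/trace-free splitting, the further $J$-equivariant decomposition of the trace-free part (Koiso's framework), and the identification of the pure-trace bound with the K\"ahler--Einstein first-eigenvalue equality $\lambda_1=2\lambda$ (sharp on $\CC P^m$ because of its holomorphic vector fields) are all correct. Your final step---computing the spectrum of $\Lap_L$ on each irreducible piece via Frobenius reciprocity and Casimir eigenvalues on $SU(3)/S(U(2)\times U(1))$---is precisely Warner's method.

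One caution on the trace-free Hermitian piece: under the identification with primitive real $(1,1)$-forms one has $\Lap_L=\Lap_d$, and the fact that $h^{1,1}(\CC P^2)=1$ only rules out the \emph{zero} eigenvalue; it does not by itself force $\Lap_d\ge 2\lambda$. You do acknowledge this by naming the quantitative bounds as ``the main obstacle'' and deferring to the representation-theoretic computation, but the sentence ``so there are no harmonic (hence no zero) modes among these and their eigenvalues are $\geq 2\lambda$'' reads as if the second clause followed from the first, which it does not. Treat the cohomological vanishing as locating where a destabilising mode \emph{would} live (exactly the mechanism exploited in Theorem~\ref{kaehlerthm} when $h^{1,1}>1$), not as a substitute for the spectral calculation that actually establishes the gap.
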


\begin{proof}[Proof of Theorem \ref{thmstab}]
Write $g=dr^2+r^2\gamma$ as usual. A general variation $h$ can be expanded as follows:
\begin{equation}
h=Adr^2+rB_i(dr\otimes dx^i+dx^i\otimes dr)+r^2C_{ij}dx^i\otimes dx^j.
\end{equation}
We assume that $h$ is transverse, i.e. $\text{div}_{g}(h)=0$. With respect to the $\gamma$-metric this transversality is expressed by the following equations:
\begin{eqnarray}
0&=&r\partial_rA+4A+\div(B)-\text{tr}(C),\label{div0}\\
0&=&r\partial_rB+5B+\div(C).\label{divi}
\end{eqnarray}
These relations between $A$, $B$ and $C$ will be used later. Next, we note that
\begin{equation}\label{riemcone}
\Riem_g(h,h)=\tfrac{1}{r^2}\left[\Riem_{\gamma}(C,C)+|C|^2-\text{tr}(C)^2\right],
\end{equation}
where the right hand side is computed with respect to the metric $\gamma$. Furthermore, a somewhat cumbersome computation yields:
\begin{align}
&|\nabla^g h|_{g}^2=(\partial_rA)^2+2|\partial_rB|^2+|\partial_rC|^2\\
&\quad+\tfrac{1}{r^2}\left[|\nabla A-2B|^2
+2|A\gamma+\nabla B-C|^2+|\nabla_i C_{jk}+\gamma_{ij} B_k+\gamma_{ik} B_j|^2\right].\nonumber
\end{align}
Squaring this out, and using also integration by parts and (\ref{riemcone}) we obtain
\begin{align}\label{hugeformula}
&\int_{C(\CC P^2 )} \left(|\nabla^g h|_{g}^2-2\Riem_g(h,h)\right)dV_g\\
&=\int_0^\infty\int_{\CC P^2}\big( (\partial_rA)^2+2|\partial_rB|^2+|\partial_rC|^2+\tfrac{1}{r^2}\big[|\nabla C|^2-2\Riem_{\gamma}(C,C)+2\mathrm{tr}(C)^2\nonumber\\
&\quad\qquad\qquad+|\nabla A|^2+2|\nabla B|^2+8A^2+14|B|^2-4A\mathrm{tr}(C)
+(8-\alpha)A\div(B)\nonumber\\
&\quad\qquad\qquad+(8-\beta)\langle  B,\div (C) \rangle-\alpha\langle \nabla A, B\rangle-\beta\langle \nabla B, C \rangle\big]\big)dVr^{4}dr,\nonumber
\end{align}
where $\alpha$ and $\beta$ are parameters that will be chosen later. Now, let us estimate the quantities in (\ref{hugeformula}) in five steps. First, by Kato's and Hardy's inequality we get
\begin{multline}
\int_0^\infty\int_{\CC P^2}\big[ (\partial_rA)^2+2|\partial_rB|^2+|\partial_rC|^2\big]dVr^{4}dr\\
\geq \int_0^\infty\int_{\CC P^2}\tfrac{1}{r^2}\big[ \tfrac{9}{4}A^2+\tfrac{9}{2}|B|^2+\tfrac{9}{4}|C|^2\big]dVr^4dr.
\end{multline}
Second, by Theorem \ref{thmwarner} applied to $C-\left(\tfrac{\int_{\CC P^2}\mathrm{tr}C\,dV}{4\int_{\CC P^2} dV}\right)\gamma$ and H\"older's inequality we obtain
\begin{align}
\int_0^\infty\int_{\CC P^2}\tfrac{1}{r^2}\big[|\nabla C|^2-2\Riem_{\gamma}(C,C)+\tfrac{3}{2}\mathrm{tr}(C)^2\big]dVr^{4}dr\geq 0.
\end{align}
Third, there is one term that we keep as it stands:
\begin{align}\label{thirdlarge}
\int_0^\infty\int_{\CC P^2}\tfrac{1}{r^2}\big[\tfrac{1}{2}\mathrm{tr}(C)^2+|\nabla A|^2+2|\nabla B|^2+8A^2+14|B|^2-4A\mathrm{tr}(C)\big]dVr^4dr.
\end{align}
Fourth, using (\ref{div0}), (\ref{divi}), and integration by parts we obtain
\begin{align}\label{fourthlarge}
&\int_0^\infty\int_{\CC P^2}\tfrac{1}{r^2}\big[
(8-\alpha)A\div(B)+(8-\beta)\langle  B,\div (C) \rangle\big]dVr^{4}dr\\
&\quad\geq\int_0^\infty\int_{\CC P^2}\tfrac{1}{r^2}\big[
-\tfrac{5}{2}(8-\alpha)A^2-\tfrac{7}{2}(8-\beta)\abs{B}^2+(8-\alpha)A\mathrm{tr}(C)\big]dVr^{4}dr.\nonumber
\end{align}
Fifth and finally, using the Cauchy-Schwarz inequality and Young's inequality we get
\begin{align}
&\int_0^\infty\int_{\CC P^2}\tfrac{1}{r^2}\big[
-\alpha\langle \nabla A, B\rangle-\beta\langle \nabla B, C \rangle\big]dVr^{4}dr\\
&\qquad\geq\int_0^\infty\int_{\CC P^2}\tfrac{1}{r^2}\big[
-\abs{\nabla A}^2-\tfrac{\alpha^2}{4}\abs{B}^2-2\abs{\nabla B}^2-\tfrac{\beta^2}{8}\abs{C}^2\big]dVr^{4}dr.\nonumber
\end{align}
Putting everything together, noting also that the sum of the cross-terms in (\ref{thirdlarge}) and (\ref{fourthlarge}) can be estimated by Young's inequality,
\begin{equation}
(4-\alpha)A\mathrm{tr}C\geq-\eps\abs{4-\alpha}\mathrm{tr}(C)^2-\tfrac{1}{4\eps}\abs{4-\alpha}A^2,
\end{equation}
we obtain the estimate
\begin{align}
&\int_{C(\CC P^2 )} \left(|\nabla^g h|_{g}^2-2\Riem_g(h,h)\right)dV_g\\
&\geq\int_0^\infty\int_{\CC P^2}\tfrac{1}{4r^2}\big[ (10\alpha-39-\abs{4-\alpha}/\eps)A^2+(-38-\alpha^2+14\beta)\abs{B}^2\nonumber\\
&\quad\qquad\qquad\qquad+(9-\beta^2/2)\abs{C}^2+(2-4\eps\abs{4-\alpha})\mathrm{tr}(C)^2\big]dVr^{4}dr.\nonumber
\end{align}
Choosing $\alpha$, $\beta$, and $\eps$ suitably (e.g. $\alpha=21/5$, $\beta=4$, $\eps=1$ does the job), we conclude that
\begin{align}
&\int_{C(\CC P^2 )} \left(|\nabla^g h|_{g}^2-2\Riem_g(h,h)\right)dV_g\geq 0,
\end{align}
for all transverse variations $h$ with compact support, i.e. the Ricci-flat cone over $\CC P^2$ is stable.
\end{proof}

\section{Ricci-flat cones over four-manifolds}\label{secff}
In the last section we have seen that the cone over $\CC P^2$ is stable. Besides this cone, we have the following fundamental examples of Ricci-flat cones over $4$-manifolds:
\begin{equation*}
C(S^2\times S^2),\;C(\CC P^2\sharp p \overline{\CC P^2})_{3\leq p\leq 8},\;C(\CC P^2\sharp \overline{\CC P^2}),\;C(\CC P^2\sharp 2 \overline{\CC P^2}).
\end{equation*}
The cone over $S^2\times S^2$ is unstable by Theorem \ref{produnstabthm}. Concerning the next family of examples, Tian proved the existence of K\"{a}hler-Einstein metrics on $\CC P^2\sharp p\overline{\CC P^2}$, that is on the blowup of  $\CC P^2$ at $3\leq p\leq 8$ points in general position \cite{Ti}. The cones over them are unstable by Theorem \ref{kaehlerthm}. Finally, there exists no K\"{a}hler-Einstein metric on $\CC P^2\sharp\overline{\CC P^2}$ and $\CC P^2\sharp 2\overline{\CC P^2}$, but an Einstein metric conformal to an extremal K\"{a}hler metric:

\begin{theorem}[Page \cite{Pa}, Chen-LeBrun-Weber \cite{CLW}]
There exists a positive Einstein metric on $\CC P^2\sharp \overline{\CC P^2}$ and $\CC P^2\sharp 2\overline{\CC P^2}$. This Einstein metric $g$ is conformal to an extremal K\"{a}hler metric $k$ with positive scalar curvature $s_k$, in fact $g=s_k^{-2}k$.
\end{theorem}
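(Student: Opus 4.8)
The plan is to treat the two manifolds by the common principle that a positive Einstein metric on a compact complex surface which is \emph{not} itself Kähler-Einstein should be sought within a conformal class containing a Kähler metric. The organizing fact, due to Derdzinski, is that if $(M^4,J,k)$ is a compact Kähler surface with nowhere-vanishing scalar curvature $s_k$ and $k$ is \emph{Bach-flat}, then the conformally rescaled metric $g=s_k^{-2}k$ is Einstein, and moreover such a $k$ is automatically extremal in the sense of Calabi (the gradient of $s_k$ is a holomorphic vector field). Thus the problem splits into: (i) locating a Bach-flat Kähler metric with $s_k>0$ on each of the two del Pezzo surfaces $\CC P^2\sharp\overline{\CC P^2}$ and $\CC P^2\sharp 2\overline{\CC P^2}$, and (ii) verifying that the resulting Einstein constant is positive, which here follows from the positivity of $s_k$ together with the Fano nature of these blow-ups. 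With this reduction in hand, the two cases diverge in technique.

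For the Page manifold $\CC P^2\sharp\overline{\CC P^2}$, realized as the nontrivial $S^2$-bundle over $S^2$, I would exploit the large symmetry group and write down a cohomogeneity-one $U(2)$-invariant ansatz. First I would parametrize the metric by a single warping profile over the interval transverse to the principal orbits, so that the Einstein condition $\Rc(g)=\lambda g$ collapses to a system of ODEs in one variable. Next I would integrate this system explicitly, producing a one-parameter family of local solutions. The decisive step is then to impose the smooth-closure (boundary) conditions at the two ends where the orbits degenerate; these conditions force a single algebraic (polynomial) constraint on the free parameter, whose admissible root selects Page's metric. Finally I would check a posteriori that the Kähler metric conformal to it is extremal with $s_k>0$, recovering the relation $g=s_k^{-2}k$.

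For the Chen-LeBrun-Weber manifold $\CC P^2\sharp 2\overline{\CC P^2}$ no such explicit integration is available, and I would instead run a variational existence argument on the Kähler cone. The first step is to exploit that this surface is toric, so that in a suitable Kähler class the existence (and uniqueness up to automorphism) of an extremal Kähler metric can be attacked through Abreu's equation and the theory of extremal metrics on toric varieties. Second, following LeBrun's formula, I would express the Calabi energy $\int_M s_k^2\,dV_k$ of the extremal representative as an explicit function of the Kähler class alone, through topological pairings and the extremal (Futaki-type) vector field. Third, I would minimize this functional over the Kähler cone; the crucial point is that an interior minimizer corresponds exactly to the vanishing of the relevant part of the Bach tensor, so that the extremal metric realizing it is Bach-flat. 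Invoking Derdzinski's criterion then yields the Einstein metric, with $s_k>0$ checked along the way.

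The main obstacle is squarely the existence and control of the extremal Kähler metrics underlying the second case: one must solve a fully nonlinear fourth-order PDE (the Calabi/Abreu equation) with a priori estimates strong enough to survive passage to the limit, and then guarantee that the minimization over Kähler classes is attained in the \emph{interior} of the cone rather than degenerating toward a boundary class where the metric collapses or the extremal representative ceases to exist. Controlling this degeneration---ruling out that the infimum of the Calabi energy escapes to a wall of the Kähler cone---is the heart of the Chen-LeBrun-Weber argument, and is precisely where the stability and positivity properties of the toric geometry of $\CC P^2\sharp 2\overline{\CC P^2}$ must be used. By contrast, for the Page metric the only real subtlety is the correct bookkeeping of the smooth-closure conditions that pin down the parameter.
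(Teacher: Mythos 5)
This theorem is not proved in the paper at all: it is stated as imported background, with the proofs delegated entirely to the cited sources Page \cite{Pa} and Chen--LeBrun--Weber \cite{CLW}, and the paper's own work only begins afterwards (expressing the stability integrand in terms of the conformally related extremal K\"ahler metric). Your outline is, in essence, a faithful reconstruction of how those original proofs go: Page's metric is indeed obtained from a cohomogeneity-one $U(2)$-invariant ansatz on the nontrivial $S^2$-bundle over $S^2$, reduced to ODEs whose smooth-closure conditions at the degenerate orbits pin down the free parameter; and the Chen--LeBrun--Weber construction indeed pivots on Derdzinski's theorem (Bach-flat K\"ahler implies extremal, with $g=s_k^{-2}k$ Einstein where $s_k\neq 0$), on LeBrun's a priori formula for the Calabi energy as a function of the K\"ahler class, and on minimizing that functional over the K\"ahler cone so that the interior critical point is Bach-flat. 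Two caveats on accuracy. First, your existence mechanism for the extremal metric on $\CC P^2\sharp 2\overline{\CC P^2}$ is slightly off: \cite{CLW} do not solve Abreu's equation on the toric side; they run an openness-plus-compactness continuity argument, with closedness supplied by X.~X.~Chen-type weak compactness for extremal metrics, and the toric and bilateral symmetries are used to control the relevant geometric quantities -- the toric structure enters through symmetry rather than through toric PDE existence theory, which for extremal metrics was not available at the time. Second, positivity of the Einstein constant is not an abstract consequence of the blow-ups being Fano; it is verified concretely (the present paper, for instance, computes $\kappa\approx 182.2$ for the Page metric and $\kappa\approx 7.54$ for the Chen--LeBrun--Weber metric in its normalization). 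Neither caveat is a fatal gap, and you correctly identify where the real analytic work lies -- interior attainment of the minimizing K\"ahler class and compactness for extremal metrics -- which is precisely the heart of \cite{CLW}; but as written your text is a plan that names these steps rather than a proof that executes them.
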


We will now investigate the stability of the Ricci-flat cone over the Page metric and the Chen-LeBrun-Weber metric. As a first step we express our stability integrand in terms of the conformally related extremal K\"{a}hler-metric, namely we have the following lemma:

\begin{lemma} For every traceless symmetric $2$-tensor $h$ we have the pointwise identity
\begin{align}
&\left(-\abs{\D^g h}_g^2+2\Riem_g(h,h)\right)dV_g=\\
&\quad \bigl(-\abs{\D h}^2+2\Riem(h,h)-4\abs{h}^2\abs{\D\log s}^2-2\langle\D\abs{h}^2,\D\log s\rangle-12\abs{h(\D\log s,\cdot)}^2
\nonumber\\
&\quad-4\langle {h}^2,\D^2\log s\rangle-4\langle \div{h}^2,\D\log s\rangle
+8h(\div h,\D\log s) \bigr)s^2dV,\nonumber
\end{align}
where the quantities on the right hand side are computed with respect to the extremal K\"{a}hler-metric $k$.
\end{lemma}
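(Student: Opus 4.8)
The plan is to treat this as a pure conformal-change computation. Since $g=s^{-2}k$, I would write the conformal factor as $\phi=-\log s$ (so that $g=e^{2\phi}k$) and carry everything out with respect to the fixed background $k$, reserving $\D$, $\abs{\cdot}$, $\Riem$, $\div$ for $k$-quantities, exactly as in the statement. Before touching any tensor algebra I would record the scalings, since they already explain the homogeneity of the identity. In real dimension four, $dV_g=s^{-4}dV$ and $g^{ij}=s^2k^{ij}$, so each raised index contributes a factor $s^2$. A quick count then shows that $\abs{\D^g h}_g^2$ (three inverse metrics) and $\Riem_g(h,h)$ (the $(0,4)$-Riemann tensor scales like $s^{-2}$, corrected by four inverse metrics) are \emph{both} equal to $s^6$ times a $k$-expression. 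Multiplying by $dV_g=s^{-4}dV$ produces precisely the overall factor $s^2\,dV$ on the right, and this sanity check fixes the structure of the whole computation.

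Next I would compute the two ingredients. For the connection I would use $\Gamma(g)^m_{ij}=\Gamma(k)^m_{ij}+\delta^m_i\phi_j+\delta^m_j\phi_i-k_{ij}\phi^m$, which gives
\[
\D^g_p h_{ij}=\D_p h_{ij}-2\phi_p h_{ij}-\phi_i h_{pj}-\phi_j h_{pi}+k_{pi}\,h(\D\phi,\cdot)_j+k_{pj}\,h(\D\phi,\cdot)_i.
\]
Squaring with three copies of $k^{-1}$ and multiplying by $s^6$ yields $\abs{\D h}^2$ together with cross terms of the form $\langle\D\abs{h}^2,\D\phi\rangle$ and $h^{pj}\D_p h_{ij}\,\phi^i$, and with purely quadratic terms $\abs{h}^2\abs{\D\phi}^2$ and $\abs{h(\D\phi,\cdot)}^2$; the tracelessness of $h$ annihilates the $\tr h$ terms generated by the $k_{pi}$-pieces. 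For the curvature I would insert the conformal transformation of the $(0,4)$-Riemann tensor (the Kulkarni--Nomizu correction built from $\Hess\phi$, $\D\phi\otimes\D\phi$, and $\abs{\D\phi}^2k$), contract twice with $h$, and again extract the factor $s^6$. This produces $\Riem(h,h)$ plus the Hessian term $\langle h^2,\Hess\phi\rangle$ and further copies of $\abs{h}^2\abs{\D\phi}^2$ and $\abs{h(\D\phi,\cdot)}^2$.

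Finally I would form $-\abs{\D^g h}_g^2+2\Riem_g(h,h)$ and collect coefficients. The surviving $h\cdot\D h$ cross terms are not yet in the advertised form, so I would apply the Leibniz rule $\div(h^2)_j=(\div h)_k\,h^k{}_j+h_{ik}\D^i h^k{}_j$ to repackage them as the combination $\langle\div h^2,\D\phi\rangle$ and $h(\div h,\D\phi)$. This is a pointwise algebraic regrouping, so no integration by parts is involved and the identity remains genuinely pointwise. Substituting $\phi=-\log s$ (which preserves the quadratic terms $\abs{\D\phi}^2=\abs{\D\log s}^2$ but flips the sign of the terms linear in $\D\phi$ and of $\Hess\phi=-\D^2\log s$) then delivers the stated signs and coefficients.

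I expect the only real difficulty to be the coefficient bookkeeping. The $-12\abs{h(\D\log s,\cdot)}^2$ and $-4\langle h^2,\D^2\log s\rangle$ terms each receive contributions from \emph{both} the gradient piece and the curvature piece, so getting them right forces careful tracking of how the quadratic connection corrections in $\abs{\D^g h}_g^2$ combine with the Kulkarni--Nomizu terms in $\Riem_g(h,h)$; in particular the conformal curvature formula must be inserted with the correct signs. Everything else is routine once the $s^6$-homogeneity, the tracelessness simplifications, and the $\div(h^2)$ regrouping are in place.
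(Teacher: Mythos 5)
Your proposal is correct and follows essentially the same route as the paper: conformally transform the connection (Christoffel corrections with $\phi=-\log s$) and the $(0,4)$-Riemann tensor (Kulkarni--Nomizu correction), exploit tracelessness of $h$, extract the $s^6$-homogeneity, and regroup the cross terms via the Leibniz rule for $\div(h^2)$ --- this is precisely the paper's computation, which cites Besse for the transformation formulas. One small correction to your closing remark: the term $-4\langle h^2,\D^2\log s\rangle$ comes \emph{only} from the curvature piece (the gradient piece contains no second derivatives of $s$), whereas $-12\abs{h(\D\log s,\cdot)}^2$ indeed receives contributions from both (namely $-8$ from the gradient piece and $-4$ from the curvature piece).
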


\begin{proof}
Since $g=s_k^{-2}k$, by the usual formulas for the conformal transformation of geometric quantities (see e.g. \cite[Thm. 1.159]{Be}) we obtain
\begin{align}
&\!\!\!\!\!\!\Riem_g(h,h)s_k^{-2}\tfrac{dV_g}{dV_k}\\
&=\left(\Riem_k+k\owedge(\mathrm{Hess}_k\log s+\D\log s\otimes \D \log s -\tfrac{1}{2}\abs{\D \log s}_k^2k)\right)(h,h)\nonumber\\
&=\Riem(h,h)-2\langle {h}^2,\D^2\log s\rangle-2\langle {h}^2,\D\log s\otimes \D\log s\rangle+\abs{h}^2\abs{\D\log s}^2,\nonumber
\end{align}
where the last line is computed with respect to the metric $k$. Furthermore
\begin{align}
&\!\!\!\!\!\!\abs{\D^g h}_g^2s_k^{-2}\tfrac{dV_g}{dV_k}=\abs{\D^g h}_k^2\\
&=\abs{\D_ih_{jk}+2h_{jk}\D_i\log s +h_{ik}\D_j\log s \nonumber\\
&\qquad+h_{ij}\D_k\log s -h_{pk}k_{ij}\D_p\log s-h_{pj}k_{ik}\D_p\log s}^2\nonumber\\
&=\abs{\D h}^2+6\abs{h}^2\abs{\D\log s}^2+8\abs{h(\D\log s,\cdot)}^2+4\D_ih_{jk}h_{jk}\D_i\log s \nonumber\\
&\qquad+4\D_ih_{jk}h_{ik}\D_j\log s -4\D_jh_{jk}h_{pk}\D_p\log s ,
\nonumber
\end{align}
and the claim follows.
\end{proof}

Let $\Sigma=\CC P^2\sharp \overline{\CC P^2}$ respectively $\CC P^2\sharp 2\overline{\CC P^2}$, and write $\tilde{h}=s^{-2}h$. From now on, we assume in addition $\div_g(h)=0$, that is $\div(s^{-2}h)=0$ with respect to the metric $k$. Using this, the lemma, and integration by parts we obtain
\begin{align}
&\int_\Sigma\left(-\abs{\D^g h}_g^2+2\Riem_g(h,h)\right)dV_g\\
&=\int_\Sigma \biggl(-\abs{\D h}^2+2\Riem(h,h)+2\abs{h}^2\Lap\log s+12\abs{h(\D\log s,\cdot)}^2  \biggr)s^2dV\nonumber\\
&=\int_\Sigma \biggl(-\abs{\D \tilde{h}}^2+2\Riem(\tilde{h},\tilde{h})+4\abs{\tilde{h}}^2\Lap\log s +8\abs{\tilde{h}}^2\abs{\D\log s}^2+12\abs{\tilde{h}(\D\log s,\cdot)}^2 \biggr)s^6dV.\nonumber
\end{align}
where again the right hand side is computed with respect to the metric $k$.
Since $h^{(1,1)}(\Sigma )>1$, we can find a traceless test variation $\tilde{h}$ that comes from a harmonic $(1,1)$-form on the K\"ahler manifold $(\Sigma ,k)$. Then the Bochner formula gives
\begin{equation}
0=\Lap\tilde{h}+2\Riem(\tilde{h},\cdot)-\Rc.\tilde{h}-\tilde{h}.\Rc.
\end{equation}
Furthermore, the conformal transformation law for the Ricci-tensor yields
\begin{equation}
\Rc=\left(3s^{-2}+2\abs{\D\log s}^2-\Lap\log s\right)k-2\left(\D^2\log s+\D\log s \otimes \D\log s\right).
\end{equation}
Finally, a pointwise computation gives the following formulas:
\begin{align}
4\abs{\tilde{h}(\D\log s,\cdot)}^2&=\abs{\tilde{h}}^2\abs{\D\log s}^2,\\
4\div(\tilde{h}^2)&=\D\abs{\tilde{h}}^2.
\end{align}
Putting everything together we obtain the following theorem (see also \cite{SHthesis} for an alternative proof):
\begin{theorem} If $h$ is a test-variation as above, then:
\begin{align}
&\int_\Sigma \left(-\abs{\D^g h}_g^2+2\Riem_g(h,h)\right)dV_g=\int_\Sigma \left(6-\Lap_k s^2\right)\abs{h}_g^2dV_g.
\end{align}
\end{theorem}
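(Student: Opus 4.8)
The plan is to take the integral identity established immediately before the statement, namely the expression for $\int_\Sigma(-\abs{\D^g h}_g^2+2\Riem_g(h,h))\,dV_g$ as an integral over $\Sigma$ of $(-\abs{\D\tilde h}^2+2\Riem(\tilde h,\tilde h)+4\abs{\tilde h}^2\Lap\log s+8\abs{\tilde h}^2\abs{\D\log s}^2+12\abs{\tilde h(\D\log s,\cdot)}^2)\,s^6$ with respect to $k$, and to massage it until only scalar quantities involving $s$ survive. First I would use the pointwise formula $4\abs{\tilde h(\D\log s,\cdot)}^2=\abs{\tilde h}^2\abs{\D\log s}^2$ to turn the last term into $3\abs{\tilde h}^2\abs{\D\log s}^2$, so that the only tensorial objects remaining are $\abs{\D\tilde h}^2$ and $\Riem(\tilde h,\tilde h)$.

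The key move is then to integrate the term $-\abs{\D\tilde h}^2 s^6$ by parts against the weight $s^6$, which produces $\langle\tilde h,\Lap\tilde h\rangle s^6$ together with a scalar remainder $-\tfrac12\abs{\tilde h}^2\Lap(s^6)$. Substituting the Bochner formula $\Lap\tilde h=-2\Riem(\tilde h,\cdot)+\Rc.\tilde h+\tilde h.\Rc$ converts $\langle\tilde h,\Lap\tilde h\rangle$ into $-2\Riem(\tilde h,\tilde h)+2\langle\Rc,\tilde h^2\rangle$; the curvature contribution then cancels the $+2\Riem(\tilde h,\tilde h)$ already present, so that the entire curvature of the cone disappears and we are left with $2\langle\Rc,\tilde h^2\rangle$. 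Feeding in the conformal transformation law for $\Rc$, and using $\langle k,\tilde h^2\rangle=\abs{\tilde h}^2$ and $\langle\D\log s\otimes\D\log s,\tilde h^2\rangle=\tfrac14\abs{\tilde h}^2\abs{\D\log s}^2$, all but one term become scalars; the single genuinely tensorial leftover is the Hessian term $-4\langle\D^2\log s,\tilde h^2\rangle s^6$.

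That Hessian term is where I expect the real work to be, and it is the one place the hypothesis $\div_g h=0$ is used in an essential way. I would integrate it by parts, moving one derivative off $\D^2\log s$ and using the pointwise identity $4\div(\tilde h^2)=\D\abs{\tilde h}^2$ (which encodes the transversality of $\tilde h$ together with the K\"ahler structure) to rewrite $\div(\tilde h^2)$; after a second integration by parts the gradient pieces should cancel and this term collapses to a clean multiple of $\abs{\tilde h}^2\Lap\log s\,s^6$. From there the proof is bookkeeping: rewrite $\abs{\D\log s}^2$, $\Lap\log s$ and $\Lap(s^6)$ in terms of $s$, $\abs{\D s}^2$ and $\Lap s$, collect the coefficients of $s^4\abs{\D s}^2$ and $s^5\Lap s$, and check that together they assemble into $-\Lap_k(s^2)=-2s\Lap s-2\abs{\D s}^2$ while the only surviving constant term is exactly $6$ (the powers of $s$ here reflect $\dim\Sigma=4$). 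Converting back via $\abs{h}_g^2\,dV_g=s^4\abs{\tilde h}^2\,dV_k$ then yields $\int_\Sigma(6-\Lap_k s^2)\abs{h}_g^2\,dV_g$, as claimed. The main obstacle is not any single step but the fragility of this final cancellation: essentially every term has to be tracked with its correct coefficient for the messy combination to collapse to the deceptively simple $6-\Lap_k s^2$, so I would verify the numerology carefully at the end.
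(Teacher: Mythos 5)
Your proposal is correct and follows essentially the same route as the paper's proof: starting from the integral identity preceding the theorem, integrating $-\abs{\D\tilde h}^2 s^6$ by parts to produce $\langle\tilde h,\Lap\tilde h\rangle s^6-\tfrac12\abs{\tilde h}^2\Lap(s^6)$, cancelling the curvature via the Bochner formula, inserting the conformal transformation law for $\Rc$, and collapsing the leftover Hessian term by parts using $4\div(\tilde h^2)=\D\abs{\tilde h}^2$ --- and all your coefficients check out against the paper's intermediate lines. One small correction of attribution: the identity $4\div(\tilde h^2)=\D\abs{\tilde h}^2$ is not really where transversality enters (that was already used in deriving the starting integral identity); it follows from the pointwise algebraic fact $\tilde h^2=\tfrac14\abs{\tilde h}^2 k$, valid for symmetric tensors built from anti-self-dual (primitive $(1,1)$) harmonic forms in real dimension four.
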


\begin{proof}
Putting together the above facts we compute
\begin{align}
&\!\!\!\int_\Sigma \left(-\abs{\D^g h}_g^2+2\Riem_g(h,h)\right)dV_g\\
&=\int_\Sigma \biggl(\langle\tilde{h},\Lap\tilde{h}\rangle+2\Riem(\tilde{h},\tilde{h})+\abs{\tilde{h}}^2\Lap\log s -7\abs{\tilde{h}}^2\abs{\D\log s}^2 \biggr)s^6dV\nonumber\\
&=\int_\Sigma \biggl(6s^{-2}\abs{\tilde{h}}^2-\abs{\tilde{h}}^2\Lap\log s -4\langle\tilde{h}^2,\D^2\log s\rangle -4\abs{\tilde{h}}^2\abs{\D\log s}^2\biggr)s^6dV\nonumber\\
&=\int_\Sigma \biggl(6s^{-2}\abs{\tilde{h}}^2-2\abs{\tilde{h}}^2\Lap\log s-4\abs{\tilde{h}}^2\abs{\D\log s}^2\biggr)s^6dV\nonumber,
\end{align}
and the claim follows.
\end{proof}

To finish the proof of Theorem \ref{thm5} and Conjecture \ref{conj2} it would be sufficient to prove the pointwise inequality $\Lap s^2<6$ for the extremal K\"ahler metrics conformal to the Page-metric respectively to the Chen-LeBrun-Weber metric (normalized such that the Einstein constant equals $3$). To finish the proof of Theorem \ref{thm4} and Conjecture \ref{conj1} an estimate $\Lap s^2<15/4$ would be sufficient, as shown by the following lemma.

\begin{lemma}
Let $C(\Sigma,\gamma)$ be a Ricci-flat cone over a four manifold and assume there exists a transverse-traceless symmetric 2-tensor $h$ on $\Sigma$ with
\begin{equation}
\int_{\Sigma}\left(-\abs{\D h}^2+2\Riem(h,h)-\tfrac{9}{4}\abs{h}^2\right)dV>0.
\end{equation}
Then the Ricci-flat cone $C(\Sigma,\gamma)$ is unstable.
\end{lemma}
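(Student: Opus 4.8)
The plan is to lift $h$ from $\Sigma$ to a transverse-traceless test variation on the cone by separation of variables, exactly as in the proofs of Theorem~\ref{produnstabthm} and Theorem~\ref{kaehlerthm}, and then to exploit the sharpness of the Hardy inequality. Concretely, set $\tilde h=f(r)\,r^2h$, where $h$ is viewed as an $r$-independent spatial tensor on $C(\Sigma,\gamma)$ and $f\in C_c^\infty(0,\infty)$ is a radial profile to be chosen later. First I would verify that $\tilde h$ is transverse-traceless with respect to $g$. Tracelessness is immediate from $\tr_\gamma h=0$, and the divergence $\div_g\tilde h$ reduces, via the Christoffel symbols recorded in Section~\ref{basiccones}, to a multiple of $\tr_\gamma h$ in its $dr$-component and to a multiple of $\div_\gamma h$ in its spatial components; both vanish, so $\tilde h$ is an admissible variation.

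Next I would compute the stability integrand. The covariant-derivative and curvature computations are identical to those in the proof of Theorem~\ref{kaehlerthm}, which are valid for an arbitrary transverse-traceless tensor $k$ before the eigenvalue equation is used; applying them with $k=h$ gives the pointwise identity
\[
|\nabla^g\tilde h|_g^2-2\Riem_g(\tilde h,\tilde h)=f'^2|h|^2+\tfrac{f^2}{r^2}\bigl(|\nabla h|^2-2\Riem_\gamma(h,h)\bigr),
\]
with the right-hand side computed on $(\Sigma,\gamma)$. Integrating over the cone with $dV_g=r^4\,dr\,dV_\gamma$ (here $n=5$) and separating variables yields
\[
\int_{C(\Sigma)}\bigl(|\nabla^g\tilde h|_g^2-2\Riem_g(\tilde h,\tilde h)\bigr)dV_g=a\,I_1+b\,I_2,
\]
where $a=\int_\Sigma|h|^2\,dV_\gamma>0$, $b=\int_\Sigma\bigl(|\nabla h|^2-2\Riem_\gamma(h,h)\bigr)dV_\gamma$, and $I_1=\int_0^\infty f'^2r^4\,dr$, $I_2=\int_0^\infty f^2r^2\,dr$.

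Finally I would conclude using Hardy. The hypothesis is exactly $b+\tfrac94a<0$, equivalently $a+\tfrac49b<0$; note $\tfrac94=(n-2)^2/4$ for $n=5$, which is where the four-dimensional base enters, and in particular $b<0$. The sharp Hardy inequality in dimension five reads $I_2\le\tfrac49I_1$ with optimal constant $C_H=\tfrac49$, so one may choose $f_j\in C_c^\infty(0,\infty)$ with $I_2(f_j)/I_1(f_j)\to\tfrac49$. Then
\[
a\,I_1(f_j)+b\,I_2(f_j)=I_1(f_j)\Bigl(a+b\,\tfrac{I_2(f_j)}{I_1(f_j)}\Bigr),
\]
and the bracket converges to $a+\tfrac49b<0$, hence is negative for $j$ large. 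For such $j$ the variation $\tilde h_j=f_j(r)\,r^2h$ is a compactly supported transverse-traceless tensor violating (\ref{stabineq}), so $C(\Sigma,\gamma)$ is unstable.

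The only genuinely technical point, and the main obstacle, is the approximation in the last step: the Hardy extremal $r^{-3/2}$ is not compactly supported, so the optimal constant $\tfrac49$ can only be realized in the limit, by truncating with logarithmic cutoffs --- the same device already underlying the instability arguments of Section~\ref{secprod} and Section~\ref{seck}. Everything else is a direct transcription of the cone computations already carried out, so I expect no further difficulty.
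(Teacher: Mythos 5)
Your proposal is correct and follows essentially the same route as the paper: the same test variation $f(r)r^2h$, the same separation-of-variables computation (your pointwise identity is exactly the paper's two displayed formulas combined), and the same use of the sharp Hardy constant $C_H=\tfrac49$ in dimension five. The only cosmetic difference is how the strict hypothesis is exploited --- the paper bumps $\tfrac94$ up to some $\lambda>\tfrac94$ and then picks a single admissible $f$, while you take a sequence $f_j$ realizing the Hardy constant in the limit; these are interchangeable.
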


\begin{proof}
Consider the  variation
\begin{equation}
H=f(r)r^2h.
\end{equation}
This variation is TT, and we compute
\begin{align}
&\abs{\nabla H}^2=\left(f'^2+2\tfrac{f^2}{r^2}\right)\abs{h}^2+\tfrac{f^2}{r^2}\abs{\nabla h}^2\\
&2\Riem(H,H)=2\tfrac{f^2}{r^2}\left(\Riem_\gamma(h,h)+\abs{h}^2\right).
\end{align}
By compactness the assumption of the lemma is also satisfied for some $\lambda$ strictly greater than $\tfrac{9}{4}$. Thus
\begin{equation}
\int_{C(\Sigma)}\left(|\nabla H|^2-2\text{Rm}(H,H)\right)dV
<\int_\Sigma\abs{h}^2dV_\gamma\int_{0}^{\infty} \left(f'^2-\lambda\tfrac{f^2}{r^2}\right)r^{n-1}dr.
\end{equation}
Since the Hardy-constant is $C_H=\tfrac{4}{9}$ for $n=5$, we can choose $f$ such that this expression becomes negative.
\end{proof}

\section{Estimates for extremal K\"ahler metrics}\label{extremalk}
The purpose of this section is to estimate $\Lap s^2$ for the extremal metric corresponding to the Page metric respectively to the Chen-LeBrun-Weber metric. Some of these estimates also appeared in \cite{SHthesis}. We will prove:
\begin{theorem}\label{Pageest}
Let $k$ be the extremal K\"ahler metric on $\CC P^2\sharp \overline{\CC P^2}$, such that $s^{-2}k$ is the Page metric with Einstein-constant equal to $3$. Then we have the pointwise estimate
\begin{equation}
\Lap s^2<15/4.
\end{equation}
\end{theorem}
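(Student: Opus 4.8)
The plan is to use the explicit structure of the Page metric via the Calabi/momentum construction. The theorem stated just before Section~\ref{secff} already tells us that the metric $k$ conformal to the Page metric is an extremal K\"ahler metric on $\CC P^2\sharp\overline{\CC P^2}=\mathbb{F}_1$, and for this cohomogeneity-one example both $k$ and its scalar curvature $s$ are explicit. I would introduce the momentum coordinate $\tau$ ranging over a closed interval $[\tau_-,\tau_+]$, in which $k$ takes the form
\[
k=\frac{d\tau^2}{\Phi(\tau)}+\Phi(\tau)\,\theta^2+(c+\tau)\,g_{\CC P^1},
\]
where $\theta$ is a connection one-form on the circle fibers, $g_{\CC P^1}$ is the round metric on the base, $c>0$ is a constant, and $\Phi$ is the momentum profile, smooth and positive on $(\tau_-,\tau_+)$ with the boundary conditions $\Phi(\tau_\pm)=0$, $\Phi'(\tau_-)=2$, $\Phi'(\tau_+)=-2$ that make the compactification smooth.

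First I would exploit the extremal property: by definition the scalar curvature $s$ is a Killing potential, hence an affine function of the momentum coordinate, $s=A+B\tau$. The profile $\Phi$ is then the explicit polynomial determined by the extremal ODE together with the boundary conditions above, and the constants $A,B,c,\tau_\pm$ are pinned down by the requirement that $s^{-2}k$ be Einstein with Einstein constant $3$. This last normalization is exactly where the specific algebraic numbers entering the Page metric appear, but everything remains explicit.

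Next I would compute the Laplacian. Since $k^{\tau\tau}=\Phi$ and the volume form of $k$ is proportional to $(c+\tau)\,d\tau$ after integrating out the symmetry directions, any function $F(\tau)$ satisfies
\[
\Lap_k F=\frac{1}{c+\tau}\frac{d}{d\tau}\!\left((c+\tau)\,\Phi(\tau)\,F'(\tau)\right).
\]
Applying this to $F=s^2$ with $s=A+B\tau$, so that $F'=2Bs$, gives
\[
\Lap_k s^2=\frac{2B}{c+\tau}\frac{d}{d\tau}\!\left((c+\tau)\,\Phi(\tau)\,(A+B\tau)\right),
\]
which is an explicit rational function of $\tau$ on $[\tau_-,\tau_+]$ (a low-degree polynomial over the linear factor $c+\tau$). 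At the endpoints $\Phi$ vanishes, so the expression stays finite there, taking the value $2B\,\Phi'(\tau_\pm)\,(A+B\tau_\pm)$ with $\Phi'(\tau_\pm)=\mp 2$.

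Finally, the estimate $\Lap_k s^2<15/4$ reduces to a one-variable inequality for this explicit rational function on the compact interval $[\tau_-,\tau_+]$. I would locate the interior critical points by solving $\tfrac{d}{d\tau}\Lap_k s^2=0$, a low-degree polynomial equation, evaluate $\Lap_k s^2$ there and at the two endpoints, and check that every value is strictly below $15/4$. The main obstacle is the bookkeeping rather than any conceptual difficulty: one must carry the Einstein normalization through correctly so that $A,B,c,\tau_\pm$ are fixed, and then verify the strict inequality with care, since $15/4$ sits only moderately above the actual maximum of $\Lap_k s^2$ and a crude bound will not suffice. Once this finite calculus check is completed, the theorem follows, and (since $15/4<6$) it simultaneously yields the estimate needed for both Theorem~\ref{thm4} and Theorem~\ref{thm5} in the Page case.
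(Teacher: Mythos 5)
Your proposal is correct and is essentially the paper's own approach: both rest on Calabi's explicit description of the extremal K\"ahler metric conformal to the Page metric, the fact that $s$ is affine in the single cohomogeneity-one (momentum) coordinate, and a reduction of $\sup\Lap s^2<15/4$ to a calculus check of an explicit rational function of one variable on a compact interval. The differences are cosmetic: the paper works in Abreu's toric symplectic coordinates on the trapezium (everything a function of $t=x_1+x_2$, which corresponds to your $\tau$), computes $\Lap s^2=\tfrac{\kappa}{3}+6|\nabla s|^2-\tfrac{s^3}{3}$ from the conformal scalar-curvature equation instead of your direct Laplacian formula, and keeps the unnormalized metric throughout, verifying $\tfrac{12}{\kappa}\sup\Lap s^2<15/4$ (in fact $<2.65$) via a scaling identity rather than building the Einstein-constant-$3$ normalization into the constants as you do.
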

Furthermore, we will give strong numerical evidence for:
\begin{conj}\label{CLWest}
Let $k$ be the extremal K\"ahler metric on $\CC P^2\sharp 2\overline{\CC P^2}$, such that $s^{-2}k$ is the Chen-LeBrun-Weber metric with Einstein-constant equal to $3$. Then we have the pointwise estimate
\begin{equation}
\Lap s^2<15/4.
\end{equation}
\end{conj}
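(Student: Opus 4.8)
The Chen-LeBrun-Weber metric is not known in closed form, so---unlike the Page case, where the estimate can be reduced to an explicit one-variable calculation---the argument must be fundamentally numerical, and the plan is to arrange matters so that the numerics touch the smallest possible piece of the geometry. The surface $\CC P^2\sharp 2\overline{\CC P^2}$ is the degree-$7$ del Pezzo surface, which is \emph{toric}, and its extremal K\"ahler metric $k$ is invariant under the $2$-torus; I would therefore work in the Abreu-Guillemin symplectic-potential formalism. Let $P\subset\RR^2$ be the moment polytope (a pentagon, obtained by truncating two corners of the triangle associated to $\CC P^2$, the two cut corners corresponding to the exceptional divisors), and let $u$ be the symplectic potential, so that in action-angle coordinates $(x,\theta)$ the metric reads $\sum u_{ij}\,dx^idx^j+\sum u^{ij}\,d\theta_id\theta_j$, the Riemannian volume is Lebesgue measure $dx\,d\theta$, the scalar curvature is $s=-\sum_{i,j}\partial_{x_i}\partial_{x_j}u^{ij}$ (Abreu's formula), and for a torus-invariant function $F(x)$ one has $\Lap_k F=\sum_{i,j}\partial_{x_i}\!\big(u^{ij}\,\partial_{x_j}F\big)$. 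The shape and scale of $P$ are fixed by requiring $g=s^{-2}k$ to be the Einstein metric with $\Rc_g=3g$.

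The decisive simplification is that for a toric \emph{extremal} metric the scalar curvature is an affine function of the moment coordinates, $s=A(x)=a_0+a_1x_1+a_2x_2$, whose coefficients depend only on $P$ and not on the (unknown) metric. Indeed, Donaldson's integration-by-parts identity gives $\int_P s\,f\,dx=2\int_{\partial P}f\,d\sigma$ for every affine $f$ and every invariant K\"ahler metric in the class (here $d\sigma$ is the lattice measure on $\partial P$), so the extremal affine function $A$ is pinned down by the $3\times 3$ linear system $\int_P A\,f\,dx=2\int_{\partial P}f\,d\sigma$, $f\in\{1,x_1,x_2\}$, whose right-hand sides are explicit edge integrals over the pentagon. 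Hence $s$, and with it the target $s^2$, are available in closed form. The product rule together with the toric Laplacian formula then yields the clean reduction
\begin{equation}
\Lap_k s^2=2s\,\Lap_k s+2\abs{\D s}_k^2,\qquad \Lap_k s=\sum_{i,j}a_j\,\partial_{x_i}u^{ij},\qquad \abs{\D s}_k^2=\sum_{i,j}a_i\,u^{ij}a_j,
\end{equation}
so the \emph{only} metric-dependent quantities entering the bound are the inverse Hessian $u^{ij}$ and its first derivatives $\partial_{x_i}u^{ij}$.

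With this reduction in hand I would run the Donaldson-Bunch algorithm \cite{DonBun}, which approximates $u$ on $\bar P$ via balanced metrics for high tensor powers of the polarization, extract $u^{ij}$ and $\partial_{x_i}u^{ij}$ on a fine grid, and evaluate $\Lap_k s^2$ pointwise. Since $2\abs{\D s}_k^2\geq 0$ and the remaining factor $s$ ranges only over the values of the affine function $A$ at the five vertices of the pentagon, the size of $\Lap_k s^2$ is controlled once the metric data are in hand, and one expects the output to sit comfortably below $15/4$, thereby furnishing the strong numerical evidence claimed. Particular care is required along $\partial P$, which represents the torus-invariant curves: there $u^{ij}$ degenerates in the symplectic picture, so I would use the Guillemin boundary behaviour of $u$ to check that $\Lap_k s^2$ extends smoothly to $\bar P$ and to evaluate it reliably near the edges and vertices, where the curvature quantities are largest and the bound is tightest.

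The main obstacle---and the reason the statement is a conjecture rather than a theorem---is the passage from the numerics to a genuinely \emph{pointwise} inequality. This would demand two ingredients that are not currently available for the Donaldson-Bunch scheme: first, a rigorous a posteriori error bound on the computed symplectic potential in a norm strong enough to control $u^{ij}$ and $\partial_{x_i}u^{ij}$ uniformly up to $\partial P$; and second, a sampling-and-interpolation estimate converting the finite grid of values into a bound valid at every point of $\bar P$. Because the extremal metric is characterized only as the solution of the nonlinear fourth-order Abreu equation subject to Guillemin boundary conditions, certifying $C^2$-closeness of the numerical approximant is genuinely hard, and in the absence of an independent analytic a priori estimate for extremal metrics on this del Pezzo surface the inequality can be established only to the level of convincing numerical evidence.
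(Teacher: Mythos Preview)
Your overall plan---toric formalism on the pentagon, determination of the extremal affine function $s$ by Donaldson's integration-by-parts, and Donaldson--Bunch numerics---is the same route the paper takes, and your discussion of why this remains a conjecture is accurate. There are, however, two substantive points where the paper does something sharper than you propose.

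First, you compute $\Lap_k s^2$ via the product rule, which forces you to extract both $u^{ij}$ and its first derivatives $\partial_{x_i}u^{ij}$ from the numerics. The paper avoids this extra derivative entirely by exploiting that $g=s^{-2}k$ has constant scalar curvature $\kappa$: the conformal transformation law in dimension four gives $s^{3}+6s\Lap s-12\abs{\D s}^{2}=\kappa$, and combining this with $\Lap s^{2}=2s\Lap s+2\abs{\D s}^{2}$ yields
\[
\Lap_k s^{2}=\tfrac{\kappa}{3}+6\abs{\D s}_k^{2}-\tfrac{s^{3}}{3}.
\]
Since $s$ is a known affine function of $x$, the only metric-dependent input is $\abs{\D s}_k^{2}=\sum a_i u^{ij}a_j$, i.e.\ the inverse Hessian $u^{ij}$ itself and no derivatives of it. This is exactly the ``we do not really need to compute two derivatives of the scalar curvature'' remark in the paper, and it makes the numerical step considerably more robust (and the boundary behaviour on $\partial P$ less delicate) than in your formulation.

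Second, you do not address how to feed the problem into Donaldson's balanced-metric algorithm, which requires a rational polarization. The paper observes the ``wonderful serendipity'' that the critical parameter $a\approx 1.958$ is close to the integral value $a=2$ (the anticanonical class), runs the numerics there, and finds $K=\sup\Lap s^{2}<1.363$. The correct normalization (Einstein constant $3$) is then handled not by scaling the polytope directly but by computing $\kappa$ from Gauss--Bonnet and the Hirzebruch signature formula (using $\abs{W_+}^2=s^2/24$ for K\"ahler surfaces), giving $\kappa\approx 7.54$; the quantity to be bounded by $15/4$ is $\tfrac{12}{\kappa}K$, which comes out below $2.17$. Your proposal leaves both the rationality issue and the determination of $\kappa$ unspecified. (A minor point: in the paper's convention the Donaldson identity reads $\int_P s\,f\,dx=\int_{\partial P}f\,d\sigma$ for affine $f$, without your factor of $2$.)
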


Note that, using the results from the previous sections, Theorem \ref{Pageest} finishes the proofs of Theorem \ref{thm4} and Theorem \ref{thm5}, and the statement of Conjecture \ref{CLWest} implies the statements of Conjecture \ref{conj1} and Conjecture \ref{conj2}.\\



To prove Theorem \ref{Pageest} and to give evidence for Conjecture \ref{CLWest} we will employ the fact that the extremal metrics on both $\CC P^{2}\sharp\overline{\CC P^{2}}$ and $\CC P^{2}\sharp 2 \overline{\CC P^{2}}$ are toric-K\"ahler metrics. The theory of toric-K\"ahler metrics is well developed and documented \cite{Abr2}. The main aspect we will use is that a toric-K\"aher manifold $\Sigma^{2m}$ (in our case $m=2$) admits a dense open set $\Sigma^{\circ}$ on which the action of the torus is free. Furthermore one may pick a special coordinate system called \emph{symplectic coordinates} such that
$$\Sigma^{\circ}  = P^{\circ}\times\mathbb{T}^{m}$$
where $P^{\circ}$ is the interior of a convex polytope $P \subset \mathbb{R}^{m}$  known as the \emph{moment polytope}. In these coordinates the metric is encoded by a convex function $u:P^{\circ} \rightarrow \mathbb{R}$ (known as the \emph{symplectic potential}) in the following way:
$$k = u_{ij}dx^{i}dx^{j}+u^{ij}d\theta^{i}d\theta^{j}.$$
Here the $x^{i}$ are coordinates on the polytope $P$, $\theta^{i}$ are coordinates on the torus, $u_{ij}$ is the Euclidean Hessian of $u$ and $u^{ij}$ is the matrix inverse of $u_{ij}$.\\

In general, an $r$-sided polytope (our polytope $P$ is in fact a \emph{Delzant} polytope \cite{Del}), can be described by $r$ inequalities $l_{i}(x)>0$ where the $l_{i}(x)$ are affine functions of $x$. As the scalar curvature is invariant under the torus action it is a function of $x$ only. The equations for an extremal metric in these coordinates can be shown to be equivalent to requiring that the scalar curvature is an affine function of the polytope coordinates, i.e.
$$s(x_{1},\ldots,x_m) = \sum_{i=1}^ma_ix_i+b.$$
The constants $a_i$ and $b$ can all be worked out \emph{a priori} from the elementary geometry of the polytope. In order to describe how, for each $l_{i}$ we define a one-form $d\sigma_{i}$  by requiring,
$$ dl_{i} \wedge d\sigma_{i} = \pm dx,$$
where the $l_{i}$ are the functions from above and $dx$ denotes the Euclidean volume form. The one-form $d\sigma_{i}$ then defines a measure on the edge defined by the $l_{i}$. We denote the measure obtained on the whole boundary $\partial P$ in this way by $d\sigma$. Donaldson proves the following integration by parts formula:

\begin{lemma}[Donaldson \cite{DonInt}]\label{LDonIBP}
Let $(M,k)$ be a toric-K\"ahler manifold with polytope $P$, symplectic potential $u$ and let $s_{k}$ be the scalar curvature of $k$. Then for all $f \in C^{\infty}(P)$
\begin{equation}\label{DonIBP}
\int_{P}u^{ij}f_{ij}dx = \int_{\partial P}fd\sigma-\int_{P}s_{k}fdx,
\end{equation}
where $d\sigma$ is the measure defined above.  
\end{lemma}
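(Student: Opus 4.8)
The plan is to eliminate the scalar curvature using Abreu's formula, recognise the resulting integrand as an exact divergence, and then reduce to a boundary analysis governed by the singular behaviour of the symplectic potential. First I would recall Abreu's formula, which expresses the scalar curvature of a toric-K\"ahler metric purely in terms of the symplectic potential: in the symplectic coordinates above one has
\[
s_k=-\sum_{i,j}\partial_i\partial_j u^{ij},
\]
where $u^{ij}$ is the inverse of the Hessian $u_{ij}$ (see \cite{Abr2}). Substituting this into the claimed identity (\ref{DonIBP}), the scalar-curvature term is completely absorbed and the statement to be proved collapses to the purely algebraic-looking assertion
\[
\int_P\Bigl(u^{ij}\partial_i\partial_j f-f\,\partial_i\partial_j u^{ij}\Bigr)\,dx=\int_{\partial P}f\,d\sigma ,
\]
an identity about the symmetric matrix field $u^{ij}$ on the polytope $P$.

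Second, I would observe that the integrand on the left is an exact divergence. A direct computation gives
\[
\partial_i\Bigl(u^{ij}\partial_j f-f\,\partial_j u^{ij}\Bigr)=u^{ij}\partial_i\partial_j f-f\,\partial_i\partial_j u^{ij},
\]
where the two cross-terms $(\partial_i u^{ij})(\partial_j f)$ and $(\partial_j u^{ij})(\partial_i f)$ cancel precisely because $u^{ij}$ is symmetric. Applying the divergence theorem then rewrites the left-hand side as the boundary integral $\int_{\partial P}\bigl(u^{ij}\partial_j f-f\,\partial_j u^{ij}\bigr)\nu_i\,dS$, where $\nu$ is the outward unit conormal and $dS$ the Euclidean surface measure on $\partial P$.

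Third — and this is the genuinely delicate step — I would match this boundary integral to $\int_{\partial P}f\,d\sigma$ using the boundary conditions satisfied by a symplectic potential of a Delzant polytope (see \cite{Del,Abr2}). Near a facet $\{l_i=0\}$ the potential has the singular expansion $u=\tfrac12\,l_i\log l_i+(\text{smooth})$, so its Hessian blows up and its inverse $u^{ij}$ degenerates precisely in the conormal direction. Consequently $u^{ij}\nu_j\to 0$ on each facet, which kills the first boundary term, while $\partial_j u^{ij}\,\nu_i$ has a finite nonzero limit; inserting the expansion shows that $-\partial_j u^{ij}\,\nu_i\,dS$ equals exactly the measure $d\sigma$ determined by $dl_i\wedge d\sigma_i=\pm dx$.

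The main obstacle is this last step: since $u$ is smooth only on the open polytope $P^{\circ}$ and singular along $\partial P$, the divergence theorem cannot be applied on $P$ directly. I would instead apply it on the shrunken region $P_\eps=\{\,l_i>\eps\ \text{for all }i\,\}$ and let $\eps\to0$, using the singular expansion of $u$ to show that the first boundary term vanishes in the limit and the second converges to $\int_{\partial P}f\,d\sigma$. Establishing the precise asymptotics of $u^{ij}$ and of $\partial_j u^{ij}$ both along and transverse to each facet, and verifying that they conspire to reproduce exactly the measure $d\sigma$, is where all the real work lies; by comparison the algebraic manipulations of the first two steps are routine.
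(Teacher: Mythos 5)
The paper itself offers no proof of this lemma --- it is quoted directly from Donaldson \cite{DonInt} --- so your proposal is to be measured against the standard argument, which it reproduces correctly in outline: substitute Abreu's formula, recognise $u^{ij}f_{ij}-f\,\partial_i\partial_ju^{ij}$ as the divergence of $u^{ij}\partial_jf-f\,\partial_ju^{ij}$ (the cross terms do cancel by symmetry of $u^{ij}$), apply the divergence theorem on $P_\eps=\{l_i>\eps \text{ for all } i\}$, and evaluate the limiting boundary terms from the singular expansion of the symplectic potential. This is the right strategy, and your identification of where the real work lies is accurate.

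There is, however, a concrete error in the boundary step: the expansion $u=\tfrac12\,l_i\log l_i+(\text{smooth})$ is inconsistent with the conventions under which the lemma is stated here, and with it your argument proves a different, and in this normalization false, identity. The conventions of this paper are: $d\sigma$ fixed by $dl_i\wedge d\sigma_i=\pm dx$, Abreu's formula $s_k=-\sum_{i,j}\partial_i\partial_ju^{ij}$ with no extra factor, and symplectic potentials with singular part $\sum_i l_i\log l_i$ \emph{without} the $\tfrac12$ --- see the explicit Page potential $u=\sum_i l_i(x)\log l_i(x)+f(x_1+x_2)$ in Section 7, consistent with the paper's $S^1$ factors having volume $4\pi$ rather than $2\pi$. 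Writing $u=c\,l_k\log l_k+w$ near the facet $\{l_k=0\}$, with $n=dl_k$ constant and $W=(w_{ij})$, one computes
\begin{equation*}
u^{ij}n_j=\frac{(W^{-1}n)^i}{c\,n^TW^{-1}n}\,l_k+O(l_k^2),
\end{equation*}
so indeed $u^{ij}\nu_j\to0$ as you claim, but $\partial_j(u^{ij}n_i)\to 1/c$; since the Euclidean surface measure satisfies $dS/\abs{dl_k}=d\sigma_k$, the surviving boundary term converges to $c^{-1}\int_{\partial P}f\,d\sigma$. Your choice $c=\tfrac12$ therefore yields $\int_P u^{ij}f_{ij}\,dx=2\int_{\partial P}f\,d\sigma-\int_P s_kf\,dx$, not (\ref{DonIBP}). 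A one-dimensional check makes the bookkeeping unambiguous: on $P=[0,1]$ with $u=x\log x+(1-x)\log(1-x)$ one gets $u^{11}=x(1-x)$, $s_k=2$, and $\int_0^1u^{11}f''\,dx=f(0)+f(1)-\int_0^1 2f\,dx$, which is exactly (\ref{DonIBP}); with the extra $\tfrac12$ one gets $u^{11}=2x(1-x)$, $s_k=4$, and the boundary contribution $2\big(f(0)+f(1)\big)$. So take $c=1$ (equivalently, check that the three normalizations --- the log expansion, $d\sigma$, and Abreu's formula --- are mutually compatible rather than imported from sources with different conventions), and your proof closes, modulo the routine uniform estimates near the corners of $P$ where several logarithmic terms interact.
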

Clearly the left-hand-side of (\ref{DonIBP}) vanishes for all affine functions $f$. Hence if $s_k = a_{1}x_{1}+...a_{m}x_{m}+b$ then by successively taking  $f=x_{1}$ to $f=x_{m}$ and $f=1$ we obtain $m+1$ linear constraints for the $m+1$ unknowns and we can find the $a_{i}$ and $b$ explicitly. A good reference for this is the survey by Donaldson \cite{Don}.\\

Another point to note is that we do not really need to compute two derivatives of the scalar curvature. If $g=s^{-2}k$ has constant scalar curvature $\kappa$, then the conformal transformation law for the scalar curvature in dimension four, gives the equation
\begin{equation}
s^{3}+6s\Delta s-12|\nabla s|^{2} = \kappa.
\end{equation}
Now as $\Delta s^{2} = 2s \Delta s+2|\nabla s|^{2}$  we have
\begin{equation}
\Delta s^{2} = \tfrac{\kappa}{3}+6|\nabla s|^{2}-\frac{s^{3}}{3}.
\end{equation}
Finally, a word about scaling. The extremal K\"ahler metrics below will show up with some specific normalization. Let $K:=\sup\Lap s^2$ in this normalization.
A rescaling of the K\"ahler metric $k\rightarrow c^{2}k$ gives a rescaling $s \rightarrow c^{-2}s$ of the scalar curvature and hence a rescaling $g\rightarrow c^{6}g$ of the Einstein metric. Now
$$\Rc(c^{6}g)=\Rc(g) = \tfrac{\kappa}{4} g = \frac{\kappa}{4c^{6}}c^{6}g,$$  
so we rescale the K\"ahler metric by $c^{2}$ where $c=\left(\frac{\kappa}{12}\right)^{1/6}$ to ensure that the Einstein constant is $3$. Since $\Lap s^2\rightarrow c^{-6}\Lap s^2$, for this correctly normalized metric we have
\begin{align}
\sup\Delta s^{2}=\tfrac{12}{\kappa}K,
\end{align}
i.e. what we want to show is the inequality $\tfrac{12}{\kappa}K<\tfrac{15}{4}$.

\subsection{The Page Metric}
The moment polytope of the manifold $\CC P^{2}\sharp\overline{\CC P}^{2}$ is a trapezium $T$.  We shall parameterise it as the set of points $x=(x_1,x_2)\in\RR^2$ satisfying the inequalities $l_{i}(x)>0$ where:
$$l_{1}(x) = x_{1}, l_{2}(x) = x_{2},l_{3}(x)= 1-x_{1}-x_{2} \text{ and } l_{4}(x) = x_{1}+ x_{2}-a.$$
Here $a$ is a parameter that effectively determines the K\"ahler class by varying the volume of the exceptional divisor. Abreu \cite{Abr1} following Calabi \cite{Cal} gave an explicit description of the symplectic potential $u$ of the extremal metric $k$. It has the form:
$$u(x_{1},x_{2}) = \sum_{i}l_{i}(x)\log(l_{i}(x)) +f(x_{1}+x_{2})$$
where $l_{i}$ are the lines defining the trapezium and the function $f$ satisfies
$$f^{''}(t) = \frac{2a(1-a)}{2at^{2}+(1+2a-a^{2})t+2a^{2}}-\frac{1}{t}.$$
The scalar curvature is then an affine function of $x_{1}+x_{2}$ and is given by $s(x_{1},x_{2}) =c_{1}(x_{1}+x_{2})+c_{2} $ where
$$c_{1} = \frac{24 a}{(1-a)(1+4a+a^{2})} \text{ and } c_{2} = \frac{6(1-3a^{2})}{(1-a)(1+4a+a^{2})}.$$\\
A computation aided by Mathematica (we emphasize that this and all the following computations involve just polynomials and in principle could be carried out to arbitrary precision without computer aid) shows that the scalar curvature $\kappa$ of the Page metric $g=s^{-2}k$, that is $\kappa=s^{3}+6s\Delta s-12|\nabla s|^{2}$, equals
\begin{tiny} 
\begin{equation}\label{Scalpage}
\frac{864a^{2}(1-6a^{2}-16a^{3}+9a^{4})+216(3a^{6}-24a^{5}+53a^{4}+32a^{3}-15a^{2}-8a-1)(x_{1}+x_{2}
)}{(a-1)^{3}(1+4a+a^{2})^{3}(x_1+x_2)}.
\end{equation}
\end{tiny}

Hence the extremal metric occurs in the class where $0<a<1$ and
$$1-6a^{2}-16a^{3}+9a^{4}=0.$$
This value is $a\approx 0.31408$. Plugging this into \ref{Scalpage} yields the scalar curvature of the Page metric $\kappa\approx 182.219$.\\

As an aside, LeBrun \cite{LeB} calculates that the critical K\"ahler class is the one for which the area of a projective line is 3.1839 times the area of the exceptional divisor. In our toric description the area of a projective line has been scaled to be 1 and the area of the exceptional divisor is $a$, hence $a\approx(3.1839)^{-1}\approx 0.31408$. Another neat verification one can do is to compute the volume of the Page metric using the toric description.
This is given by
$$(4\pi)^{2}\int \int_{T}(c_1(x_1+x_2)+c_2)^{-4}dx_1dx_2 \approx 0.072699.$$
The factor $(4\pi)^{2}$ appears above as the $S^{1}$ factors have volume $4\pi$ in this description. In Page's original paper \cite{Pa} he gives the volume $V$ of the metric as $V = 150.862\Lambda^{-2}$ where $\Lambda$ is the Einstein constant. Hence in our case $\Lambda \approx 45.554$ and so $\kappa\approx 182.219.$\\

Getting back to the original problem, a calculation (aided by Mathematica) shows that
\begin{align}
\Lap s^2&= \frac{\kappa}{3}+6|\nabla s|^{2}-\frac{s^{3}}{3}=\frac{\kappa}{3}+\frac{1}{(a-1)^{3}(1+4a+a^{2})^{3}t}\sum_{i=0}^4\alpha_i t^i,
\end{align}
with $t=x_1+x_2$ and the coefficients $\alpha_i$ given by:
\begin{align*}
\alpha_0&=6912a^{5}\\
\alpha_1&=72-648a^{2}+3456a^{3}+1944a^{4}-10368a^{5}-1944a^{6}\\
\alpha_2&=864a-3456a^{2}-15552a^{3}+10368a^{4}+11232a^{5}\\
\alpha_3&=6912a^{2}-20736a^{5}\\
\alpha_4&=11520a^{3}.
\end{align*}
Denote by $K$ the supremum of $\Lap s^2$ over $T$. It can be checked that $\tfrac{12}{\kappa}K<\tfrac{15}{4}$. In fact, a calculation (aided by Mathematica) shows that $\tfrac{12}{\kappa}K < 2.65$. This proves Theorem \ref{Pageest}.
\begin{center}
\end{center}

\subsection{The Chen-LeBrun-Weber metric}

The moment polytope for $\Sigma = \CC P^{2} \sharp 2 \overline{\CC P}^{2}$ is a pentagon $P$ with vertices at $(0,0), (a,0), (a,1), (1,a)$ and $(0,a)$.  If we view $\CC P^{2} \sharp 2 \overline{\CC P}^{2}$ as $(\CC P^{1}\times \CC P^{1}) \sharp \overline{\CC P}^{2}$, then $a$ effectively determines the volume of the exceptional divisor. Chen, LeBrun and Weber calculate that their extremal metric $k$ occurs for $a \approx 1.958$ \cite{CLW}. In order to calculate the scalar curvature using Lemma \ref{LDonIBP} we compute the integrals
$$A=\int_{P}x_{1}^{2}dx = \int_{P}x_{2}^{2}dx = \frac{1}{12}(a^{4}+4a^{3}-1),$$
$$B = \int_{P}x_{1}x_{2}dx = \frac{1}{24}(a^{4}+4a^{3}+6a^{2}-4a-1),$$
$$C =\int_{P}x_{1}dx = \int_{P}x_{2}dx = \frac{1}{6}(a^{3}+3a^{2}-1),$$
$$D=\int_{P}dx = \frac{1}{2}(a^{2}+2a-1),$$
and
$$E_{0} = \int_{\partial P}d\sigma =1+3a,\qquad E_{1} = \int_{\partial P} x_{1} d\sigma = \int_{\partial P}x_{2} d\sigma = a^{2}+a.$$
Hence, if $s_{k} = a_{1}x_{1}+a_{2}x_{2}+b$ then we can find $a_{i}$ and $b$ by solving
$$\left( \begin{array}{ccc}
A & B & C \\
B & A & C \\
C & C & D
\end{array}\right)\left(\begin{array}{c}
a_{1} \\ a_{2} \\ b
\end{array}\right) = \left(\begin{array}{c}
E_{1} \\ E_{1} \\ E_{0}
\end{array}\right).$$
Solving this system the value of $a=1.958$ yields the scalar curvature 
\begin{equation}
s_{k}(x_{1},x_{2}) = -0.423(x_{1}+x_{2})+2.790.
\end{equation}

Let $g=s_{k}^{-2}k$ be the Chen-LeBrun-Weber metric. We will now determine the normalization. Since $\Rc(g)=\Lambda g$, the Gauss-Bonnet formula gives
$$\chi(\Sigma)=\frac{1}{8\pi^2}\int_{\Sigma}\left(\abs{W_g}^2-\frac{\abs{\Rc_g^\circ}}{2}+\frac{R_g^2}{24}\right)dV_g=\frac{1}{8\pi^2}\int_{\Sigma}\abs{W_g}^2dV_g+\frac{\Lambda^2}{12\pi^2}\mathrm{Vol}_g(\Sigma).$$
The integral $\int_{\Sigma}\abs{W_g}^2dV_g$ is conformally invariant, and thus can be computed with respect to the K\"ahler metric $k$. The Hirzebruch signature formula reads
$$\sigma(\Sigma)=\frac{1}{12\pi^2}\int_{\Sigma}\left(\abs{W_+}^2-\abs{W_-}^2\right)dV,$$
and it is a standard fact for K\"ahler surfaces that $\abs{W_+}^2=s^2/24$. Putting things together we obtain
$$\int_{\Sigma}\abs{W}^2dV=2\int_{\Sigma}\abs{W_+}^2dV-12\pi^2\sigma(\Sigma)=\frac{1}{12}\int_{\Sigma}s^2dV-12\pi^2\sigma(\Sigma),$$
and conclude that
$$\Lambda^{2} = \frac{96\pi^{2}\chi(\Sigma)+144\pi^{2}\sigma(\Sigma)-\int_{\Sigma}s^{2}dV}{8\textrm{Vol}_g(\Sigma)}.$$
As $\textrm{Vol}_g(\Sigma) = 16\pi^{2}\int_{P}s(x)^{-4}dx$ we can calculate $ \Lambda \approx 1.886 $, $\kappa\approx 7.54$.\\

Our task is now to estimate $K:=\sup\Lap s^2$. To do this, we will use Donaldson's method of numerically approximating extremal metrics by `balanced' metrics \cite{Donum}, implemented by Donaldson and Bunch in \cite{DonBun}. Donaldson's algorithm only works for rational K\"ahler classes (and is computationally unfeasible for rational numbers with large denominators). It is a wonderful serendipity that the value $a \approx 1.958$ is actually very close to being integral and determining an integral K\"ahler class (unlike for the Page metric). The K\"ahler class corresponding to $a=2$ is in fact the anticanonical class $c_{1}(\Sigma)$. Using this approximation, Donaldson's algorithm gives the estimate $K<1.363$,\footnote{The C++ files are available on http://www.buckingham.ac.uk/directory/dr-stuart-hall/} and thus $\tfrac{12}{\kappa}K<2.17$. This gives strong evidence for Conjecture \ref{CLWest}.
 
\section{Ilmanen's conjecture}\label{ilmanenconj}
Let us first explain the relevant background about Perelman's energy functional, and our new variant for the noncompact case: As Ilmanen pointed out, the functional
\begin{equation}
\lambda(g)=\inf_{w:\int \! w^2=1}\int(4\abs{\D w}^2+Rw^2)dV
\end{equation}
does not detect counterexamples to the positive mass theorem for ALE-spaces, i.e. asymptotically locally euclidean metrics with nonnegative scalar curvature and negative mass. Here, the mass of an ALE-space of order $\tau>(n-2)/2$ is defined as
\begin{equation}\label{admmass}
m(g)=\lim_{r\to\infty}\int_{S_r}\left(\partial_jg_{ij}-\partial_ig_{jj}\right)dA^{i}.
\end{equation}
Minimizing sequences $w_i$ for $\lambda$ escape to infinity, giving $\lambda(g)=0$. The solution is to consider instead a noncompact variant of Perelman's energy functional that the second author introduced in \cite{Ha2}. For ALE-spaces, Ricci-flat cones or manifolds asymptotic to Ricci-flat cones it takes the form
\begin{equation}
\lambda_{\textrm{nc}}(g)=\inf_{w:\, w\to 1}\int(4\abs{\D w}^2+Rw^2)dV,
\end{equation}
where the infimum is now taken over all smooth functions $w$ approaching $1$ at infinity in the sense that $w-1=O(r^{-\tau})$ (here, the $O$-notation includes the condition that the derivatives decay appropriately), where $\tau>(n-2)/2$. In particular, $\lambda_{\textrm{nc}}(g)$ is strictly positive if the scalar curvature is nonnegative and positive at some point.\\

We can now formulate precisely and prove the first part of Ilmanen's conjecture:

\begin{theorem}
Let $M^n\, (n\geq 3)$ be a complete manifold with one end, and assume the end is diffeomorphic to $S^{n-1}\!/\Gamma\times\RR$.
Then the following are equivalent:
\begin{enumerate}
\item (lambda not a local maximum) There exists a metric $g$ on $M$ that agrees with the flat conical metric outside a compact set such that $\lambda_{\textrm{nc}}(g)>0$.
\item (failure of positive mass) There exists an asymptotically locally euclidean metric $g$ on $M$ such that $R_{g}\geq 0$ and $m(g)<0$.
\end{enumerate}\end{theorem}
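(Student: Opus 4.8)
The plan is to connect the two functionals through the conformal method, exploiting that both relevant quantities are encoded in the coefficient of $r^{2-n}$ in a solution tending to $1$ at infinity. I would write the minimization defining $\lambda_{\textrm{nc}}$ in terms of $\mathcal{F}(w)=\int(4\abs{\D w}^2+Rw^2)\,dV$, and introduce alongside it the renormalized conformal energy $\mathcal{Q}(w)=\int(\tfrac{4(n-1)}{n-2}\abs{\D w}^2+Rw^2)\,dV$, whose minimizer over the class $w\to 1$ solves the conformal Laplacian equation $L_g u=0$. The crucial elementary observation is the pointwise inequality $\tfrac{4(n-1)}{n-2}\ge 4$, so that $\mathcal{Q}(w)\ge\mathcal{F}(w)$ for every admissible $w$ and hence $\inf\mathcal{Q}\ge\lambda_{\textrm{nc}}(g)$. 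Integrating by parts against the asymptotics $w=1+(\mathrm{coeff})\,r^{2-n}$ shows that, for a metric which is exactly the flat cone outside a compact set, both $\lambda_{\textrm{nc}}(g)$ and $\inf\mathcal{Q}$ are positive multiples of the respective leading coefficients, and in fact $\inf\mathcal{Q}=-m(\hat g)$, where $\hat g=u^{4/(n-2)}g$ is the conformal representative (which is scalar-flat, since $R_{\hat g}=u^{-(n+2)/(n-2)}L_g u=0$).

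For the implication $(1)\Rightarrow(2)$ this already suffices. Given $g$ flat outside a compact set with $\lambda_{\textrm{nc}}(g)>0$, the monotonicity gives $\inf\mathcal{Q}\ge\lambda_{\textrm{nc}}(g)>0$. Solving $L_g u=0$ with $u\to 1$ produces a scalar-flat, asymptotically locally Euclidean metric $\hat g=u^{4/(n-2)}g$, whose order of decay is $n-2>(n-2)/2$ because $g$ is exactly conical near infinity, and the boundary computation yields $m(\hat g)=-\inf\mathcal{Q}<0$. Thus $\hat g$ is an ALE metric with $R_{\hat g}=0\ge 0$ and $m(\hat g)<0$, which is precisely statement (2). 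The only points needing care are the existence of the minimizer $u$ and the justification of the integration by parts at infinity, both standard for conically-asymptotic metrics.

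For the converse $(2)\Rightarrow(1)$ the monotonicity points the wrong way, so one must argue quantitatively. First I would start from an ALE metric $g_0$ with $R_{g_0}\ge 0$ and $m(g_0)<0$ and pass to its scalar-flat conformal representative $\tilde g=u^{4/(n-2)}g_0$, where $L_{g_0}u=0$ and $u\to 1$. Since $R_{g_0}\ge 0$, the maximum principle for $-\tfrac{4(n-1)}{n-2}\Lap+R_{g_0}$ forces $u\le 1$, so the conformal factor only decreases the mass and $m(\tilde g)\le m(g_0)<0$. Next I would truncate $\tilde g$ to a metric $g$ that equals $\tilde g$ on a large ball and the exact flat cone outside, interpolating on an annulus $\{R_0\le r\le 2R_0\}$; this $g$ is flat outside a compact set and is therefore a legitimate competitor for statement (1). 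The claim to establish is $\lambda_{\textrm{nc}}(g)>0$ for $R_0$ large. The mechanism is that solving $(-4\Lap+R_g)w=0$ with $w\to 1$ and reading off the $r^{2-n}$-coefficient shows $\lambda_{\textrm{nc}}(g)$ agrees to leading order with the total scalar curvature $\int_M R_g\,dV$, which in turn converges as $R_0\to\infty$ to $-2(n-1)\vol(\Sigma)\,m(\tilde g)$; since $m(\tilde g)<0$ this limit is strictly positive.

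The hard part will be exactly this last step: controlling the error in the chain $\lambda_{\textrm{nc}}(g)\approx\int_M R_g\,dV\approx -2(n-1)\vol(\Sigma)\,m(\tilde g)$ uniformly as the truncation radius grows. The scalar curvature created by the cutoff is supported on the annulus and is only of size $O(R_0^{-\tau-2})$ with $\tau>(n-2)/2$, so $\int_M R_g\,dV$ need not be small and one cannot simply linearize. The way around this is to combine the Hardy inequality $\int_0^\infty\tfrac{f^2}{r^2}r^{n-1}dr\le C_H\int_0^\infty f'^2 r^{n-1}dr$ with the decay hypothesis $\tau>(n-2)/2$ to absorb the cross terms $\int R_g(2v+v^2)\,dV$ (with $v=w-1$) into the positive Dirichlet energy, leaving the mass term as the dominant contribution of the correct sign. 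Checking that $\tau>(n-2)/2$ is precisely the threshold making this absorption work, and that the boundary integral computing $\int_M R_g\,dV$ genuinely limits onto the ADM-type mass $m(\tilde g)$, is the technical heart of the argument; the remaining ingredients are routine applications of the conformal transformation laws and the maximum principle.
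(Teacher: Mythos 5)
Your proof of $(1)\Rightarrow(2)$ is essentially the paper's own argument: both exploit $\tfrac{4(n-1)}{n-2}\geq 4$ to bound the conformal energy from below by $\lambda_{\textrm{nc}}(g)$, solve the conformal Laplacian equation with $w\to 1$, and use the mass-drop formula $m(\hat{g})=m(g)-\int\bigl(\tfrac{4(n-1)}{n-2}\abs{\D w}^2+Rw^2\bigr)dV$ together with $m(g)=0$ for a metric that is exactly conical outside a compact set. One point you dismiss as standard does require a device: positivity of the quadratic form does not exclude a kernel of $-\tfrac{4(n-1)}{n-2}\Lap_g+R_g$ on the weighted H\"older spaces, so the paper first perturbs $g$ slightly to make this index-zero operator invertible while preserving the strict positivity; something of this sort should appear in your write-up.

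Your proof of $(2)\Rightarrow(1)$ is genuinely different from the paper's. The paper invokes an ALE version of Lohkamp's method (\cite[Prop.~6.1]{Loh}): from $R_g\geq 0$ and $m(g)<0$ one can deform $g$ to a metric that agrees with the flat conical metric outside a compact set, with $R\geq 0$ everywhere and $R>0$ somewhere, after which $\lambda_{\textrm{nc}}>0$ is immediate because the potential in the functional is nonnegative. You instead truncate the scalar-flat conformal representative $\tilde{g}$ by hand and prove positivity of $\lambda_{\textrm{nc}}$ quantitatively; this mechanism does work, and your identification of $\tau>(n-2)/2$ as the exact threshold is correct. Concretely, writing $w=1+v$: the cutoff creates $R_g=O(R_0^{-\tau-2})$ on the annulus, so Hardy gives $\abs{\int R_g v^2\,dV}\lesssim R_0^{-\tau}\int\abs{\D v}^2dV$, and Cauchy-Schwarz with weight $\abs{R_g}$ plus Hardy gives $\abs{2\int R_g v\,dV}\leq\int\abs{\D v}^2dV+CR_0^{n-2\tau-2}$; meanwhile the divergence structure of scalar curvature (linear part a pure divergence whose boundary term at the inner edge of the annulus produces $-m(\tilde{g})$, quadratic part of size $O(R_0^{n-2\tau-2})$ in $L^1$) gives $\int_M R_g\,dV\to -m(\tilde{g})>0$. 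Since $\tau>(n-2)/2$ is precisely $n-2\tau-2<0$, all errors vanish as $R_0\to\infty$ and $\lambda_{\textrm{nc}}(g)\geq -\tfrac{1}{2}m(\tilde{g})>0$ for $R_0$ large. (Your normalization constant $-2(n-1)\vol(\Sigma)$ is convention-dependent; with the paper's definition of $m$ the limit is just $-m(\tilde{g})$, but only the sign matters.) Comparing the two: the paper's route is short and never leaves the class $R\geq 0$, but leans on adapting Lohkamp's compactification to the ALE setting; yours is self-contained and yields an explicit lower bound for $\lambda_{\textrm{nc}}$ in terms of the mass, at the cost of carrying out the annulus estimates above, which do close.
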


\begin{proof}
$(1)\Rightarrow(2)$: Let $g$ be a metric on $M$ hat agrees with the flat conical metric outside a compact set such that $\lambda_{\textrm{nc}}(g)>0$. Note that
\begin{equation}\label{strpos}
\inf_{w:\, w\to 1}\int\left(\tfrac{4(n-1)}{(n-2)}\abs{\D w}^2+Rw^2\right)dV\geq\lambda_{\textrm{nc}}(g)>0.
\end{equation}
For each metric $g$ we have an elliptic index-zero operator
\begin{equation}
-\tfrac{4(n-1)}{(n-2)}\Lap_g+ R_g:C^{2,\alpha}_\tau(M)\rightarrow C^{0,\alpha}_{\tau+2}(M),
\end{equation}
between H\"older-spaces with weight $\tau=n-2-\eps$, $\eps>0$ small. By perturbing the metric a bit we can assume that this operator is invertible. Choosing the perturbation small enough we can also assume that (\ref{strpos}) still holds. Writing $w=1+u$ we can now solve the equation
\begin{equation}
\left(-\tfrac{4(n-1)}{(n-2)}\Lap_g+ R_g\right)w=0,\quad w\to 1\,\,\textrm{at}\,\,\infty.
\end{equation}
The conformally related metric $\tilde{g}:=w^{4/n-2}g$ has vanishing scalar curvature and is asymptotically locally euclidean of order $\tau$. Moreover, by a computation as in \cite{Ha2}, the mass drops down under the conformal rescaling, in fact
\begin{equation}
m(\tilde{g})=m(g)-\int\left(\tfrac{4(n-1)}{(n-2)}\abs{\D w}^2+Rw^2\right)dV.
\end{equation}
Using $m(g)=0$ and the inequality (\ref{strpos}) this implies $m(\tilde{g})<0$.\\

$(2)\Rightarrow(1)$: Let $g$ be an asymptotically locally euclidean metric on $M$ such that $R_{g}\geq 0$ and $m(g)<0$. By an ALE-version of Lohkamp's method \cite[Prop. 6.1]{Loh}, we can find a metric $\tilde{g}$ that agrees with the flat conical metric outside a compact set, such that $R_{\tilde{g}}\geq 0$ and $R_{\tilde{g}}>0$ somewhere. In particular, $\lambda_{\textrm{nc}}(\tilde{g})>0$.
\end{proof}

To discuss the second part of Ilmanen's conjecture, let $(C,g_C)$ be a Ricci-flat cone (of dimension at least $3$) over a closed manifold. In particular, this is a static (nonsmooth) solution of the Ricci flow. We say that an instantaneously smooth Ricci flow is coming out of the cone, if there exists a smooth  Ricci flow $(M,g(t))_{t\in(0,T)}$ such that the following conditions are satisfied:
\begin{itemize}
\item $(M,g(t))$ is complete with bounded curvature for each $t\in(0,T)$.
\item $(M,g(t))$ converges to $(C,g_C)$ for $t\to 0$ in the Gromov-Hausdorff sense everywhere and in the Cheeger-Gromov away from the tip.
\item There is no negative $L^1$-curvature concentration in the tip, in the sense that $\lim\inf_{t\to 0}\lambda_{\textrm{nc}}(g(t))\geq0$.
\end{itemize}

Having the static solution and another solution coming out of the cone in particular gives nonuniqueness of the Ricci flow with conical initial data. With the above definition we have:

\begin{theorem}
If there is an instantaneously smooth Ricci flow coming out of $(C,g_C)$ then there exists a complete smooth manifold $(M,g)$ that exponentially approaches $(C,g_C)$ at infinity such that $\lambda_{\textrm{nc}}(g)>0$. However, there exist unstable Ricci-flat cones with no instantaneously smooth Ricci flow coming out of them.
\end{theorem}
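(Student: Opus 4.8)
The statement has two independent halves, and the plan is to prove them by quite different means: the implication that a flow coming out of the cone produces a metric with positive $\lambda_{\textrm{nc}}$ is analytic and rests on the monotonicity of the noncompact energy, whereas the counterexamples --- unstable cones admitting no flow --- come from a purely topological cobordism obstruction.

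For the first half, let $(M,g(t))_{t\in(0,T)}$ be an instantaneously smooth Ricci flow coming out of $(C,g_C)$, and take the candidate to be a single time-slice $g:=g(t_0)$ for a small fixed $t_0>0$. First I would check that $(M,g(t_0))$ is a complete smooth manifold exponentially approaching the cone: away from the tip the hypothesis gives Cheeger-Gromov convergence to the \emph{static} cone, so by interior parabolic estimates and the scale-invariance of $g_C$ the flow on the conical end is a perturbation of $g_C$ whose spatial decay is governed by the indicial roots of the linearised operator. Writing $g_C=r^2(ds^2+\gamma)$ with $s=\log r$, exponential decay in the cylindrical variable $s$ is the natural meaning here and corresponds to decay $O(r^{-\tau})$ in $r$; in particular one arranges $\tau>(n-2)/2$, so that $\lambda_{\textrm{nc}}(g(t_0))$ is well defined. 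Next I would invoke the monotonicity of $\lambda_{\textrm{nc}}$ along the Ricci flow from \cite{Ha2}: since $t\mapsto\lambda_{\textrm{nc}}(g(t))$ is non-decreasing and $\liminf_{t\to 0}\lambda_{\textrm{nc}}(g(t))\geq 0$ by assumption, we get $\lambda_{\textrm{nc}}(g(t_0))\geq 0$. Finally I would upgrade this to a strict inequality by rigidity: if $\lambda_{\textrm{nc}}(g(t_0))$ equalled the infimum value $\lim_{t\to 0}\lambda_{\textrm{nc}}(g(t))$, then $\lambda_{\textrm{nc}}$ would be constant on $(0,t_0]$, which by the equality case of the monotonicity formula forces the flow to be a static Ricci-flat solution, contradicting that it genuinely smooths the tip of the cone. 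Hence $\lambda_{\textrm{nc}}(g(t_0))>0$. Alternatively, one argues that $R\geq 0$ is preserved under the flow and the strong maximum principle gives $R_{g(t_0)}>0$ somewhere, so that the characterization recalled above (that $\lambda_{\textrm{nc}}>0$ whenever $R\geq 0$ with $R>0$ at a point) applies.

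For the second half I would produce explicit unstable cones via a capping/cobordism argument. If an instantaneously smooth Ricci flow comes out of the cone $C(\Sigma)$ over a closed oriented four-manifold $\Sigma$, then for small $t>0$ the smooth manifold $M$ is the union of an end diffeomorphic to $\Sigma\times\RR_+$ with a compact smooth cap; removing the end leaves a compact oriented five-manifold $K$ with $\partial K=\Sigma$, so $\Sigma$ must be null-cobordant. Since oriented cobordism in dimension four is detected by the signature, $\Omega_4^{SO}\cong\mathbb{Z}$ with $[\Sigma]=\sigma(\Sigma)[\CC P^2]$, a link with $\sigma(\Sigma)\neq 0$ cannot bound, and therefore no instantaneously smooth flow can come out of $C(\Sigma)$. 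I would then take $\Sigma=\CC P^2\sharp p\,\overline{\CC P^2}$ with $3\leq p\leq 8$, carrying Tian's K\"ahler-Einstein metric: the cone is unstable by Theorem \ref{kaehlerthm}, while $\sigma(\Sigma)=1-p\neq 0$ rules out any smooth cap. These are the desired unstable Ricci-flat cones with no instantaneously smooth Ricci flow coming out of them.

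The hard part is the analytic core of the first half. The monotonicity of $\lambda_{\textrm{nc}}$ can be quoted from \cite{Ha2}, but one must genuinely control the time-slice at infinity --- establishing the exponential (equivalently $O(r^{-\tau})$ with $\tau>(n-2)/2$) convergence to $g_C$ so that $\lambda_{\textrm{nc}}$ is defined and both its monotonicity and its equality case remain valid in the open, noncompact conical geometry, where the boundary terms at infinity must be shown to vanish. By contrast, the second half, once the capping picture is in place, reduces to the clean observation that $\CC P^2\sharp p\,\overline{\CC P^2}$ has nonvanishing signature for $p\neq 1$.
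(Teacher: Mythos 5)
Your first half is essentially the paper's own argument: take a time-slice of the flow, establish that it approaches $g_C$ exponentially at spatial infinity (the paper gets this from Perelman's pseudolocality plus maximum principle estimates, quoting Siepmann's thesis, rather than from an indicial-root analysis, but both sketches defer the hard estimates to references), and then apply the monotonicity formula $\frac{d}{dt}\lambda_{\textrm{nc}}(g(t))=2\int_M\abs{\Rc+\D^2f}^2e^{-f}dV\geq 0$ together with the hypothesis $\liminf_{t\to 0}\lambda_{\textrm{nc}}(g(t))\geq 0$. Your rigidity argument for the \emph{strict} inequality is a useful explicit supplement to the paper's terse "we obtain $\lambda_{\textrm{nc}}(g(t))>0$", though note that equality in the monotonicity formula only gives a steady gradient soliton $\Rc+\D^2 f=0$; to conclude it is static and Ricci-flat you need an extra step (e.g.\ $R+\abs{\D f}^2$ is constant on a steady soliton, and the conical asymptotics force this constant to be $0$, whence $\Rc\equiv 0$), and the final contradiction uses that the tip of a non-flat cone is singular. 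Your "alternative" argument, however, does not work: one cannot propagate $R\geq 0$ by the maximum principle because there is no smooth initial slice to start from, and negative scalar curvature may a priori concentrate at the tip as $t\to 0$ --- this is exactly the scenario that the hypothesis $\liminf_{t\to 0}\lambda_{\textrm{nc}}(g(t))\geq 0$ is designed to rule out, not a consequence of it.

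The genuine gap is in the second half, in the step where you assert that removing the end leaves a compact \emph{oriented} $5$-manifold $K$ with $\partial K=\Sigma$. Nothing in the definition of an instantaneously smooth flow forces the smoothing $M$ (hence $K$) to be orientable: a non-orientable manifold can perfectly well have an orientable end, so the relevant obstruction is unoriented bordism, not $\Omega_4^{SO}$. By Thom's theorem, an orientable closed $4$-manifold bounds \emph{some} compact smooth $5$-manifold if and only if its Stiefel--Whitney numbers $w_4[\Sigma]\equiv\chi(\Sigma)$ and $w_2^2[\Sigma]\equiv\sigma(\Sigma)\pmod 2$ both vanish. For $\Sigma_p=\CC P^2\sharp p\,\overline{\CC P^2}$ one has $\chi=3+p$ and $\sigma=1-p$, so for odd $p$ (your examples $p=3,5,7$) both are even and $\Sigma_p$ \emph{does} bound a smooth (non-orientable) $5$-manifold; your signature argument cannot rule out a smooth flow coming out of those cones. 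The conclusion of the theorem survives because the even values $p=4,6,8$ have odd $\chi$ and odd $\sigma$, hence nonvanishing Stiefel--Whitney numbers, and so cannot be capped off by any compact smooth $5$-manifold, orientable or not --- this is exactly what the paper's criterion "$\Sigma$ does not bound a smooth $5$-manifold" demands. So the fix is to replace the oriented-cobordism/signature step by the Stiefel--Whitney (unoriented bordism) criterion and to restrict your list of examples to $p\in\{4,6,8\}$.
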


\begin{proof}
Let $(M,g(t))_{t\in(0,T)}$ be the Ricci flow coming out of the cone. Using Perelman's pseudolocality theorem we get a rough decay estimate, and using maximum principle estimates the decay rate can be improved. This is explained in detail in \cite{Siep}, and the conclusion is that $g(t)$ approaches $g_C$ exponentially at infinity. Using this decay estimate, by a similar computation as in \cite{Ha3}, we obtain the monotonicity formula,
\begin{equation}
\frac{d}{dt}\lambda_{\textrm{nc}}(g(t))=2\int_M\abs{\Rc+\D^2f}^2e^{-f}dV\geq 0,
\end{equation}
where $w=e^{-f/2}$ is the minimizer in the definition of $\lambda_{\textrm{nc}}$.
By the definition from above and the monotonicity of $\lambda_{\textrm{nc}}$ we obtain $\lambda_{\textrm{nc}}(g(t))>0$ for $t>0$ and $(M,g(t))$ has the desired properties.\\

For the final part, let $C=C(\Sigma)$ be an unstable Ricci-flat cone over closed 4-manifold $\Sigma$ that does not bound a smooth 5-manifold. From Section \ref{secff} we know that there are many such examples. Since $\Sigma$ does not bound a smooth 5-manifold, one cannot even find topologically a manifold $M$ smoothing out the cone. Thus there is no smooth Ricci flow coming out of the cone.
\end{proof}

In full generality, it is an interesting open problem if there exist singular Ricci flows coming out of unstable Ricci-flat cones. To answer it, one has to come up with a good notion of singular Ricci flow solutions first.


\begin{thebibliography}{999999}
\bibitem{Abr1} M. Abreu, \textit{K\"ahler geometry of toric varieties and extremal metrics}, Internat. J. Math. 9, no. 6, 641--651, 1998.
\bibitem{Abr2} M. Abreu, \textit{K\"ahler geometry of toric manifolds in symplectic coordinates}, Symplectic and contact topology: interactions and perspectives (Toronto, ON/Montreal, QC, 2001), Fields Inst. Commun. 35, 1--24, 2003. 
\bibitem{Al} F. Almgren, \textit{Q-valued functions minimizing Dirichlet's integral and the regularity of area-minimizing rectifiable currents up to codimension 2}, preface by J. Taylor and V. Scheffer, World Scientific Monograph Series in Mathematics, NJ, 2000.
\bibitem{Be} A. Besse, \textit{Einstein manifolds}, Springer-Verlag, Berlin, 1987.
\bibitem{DonBun}R.S. Bunch, S. K. Donaldson, \textit{Numerical approximations to extremal metrics on toric surfaces}, Handbook of geometric analysis, Adv. Lect. Math. 7, no. 1, 1--28, 2008.
\bibitem{Cal}E. Calabi, \textit{Extremal K\"ahler metrics II}, Differential geometry and complex analysis, 95--114, Springer, Berlin, 1985.
\bibitem{Ca} H.-D. Cao, \textit{Recent progress on Ricci solitons}, Recent advances in geometric analysis, 1--38, Adv. Lect. Math.  11, Int. Press, Somerville, MA, 2010.
\bibitem{CHI} H.-D. Cao, R. Hamilton, T. Ilmanen, \textit{Gaussian densities and stability for some {R}icci solitons}, arXiv:math/0404165v1, 2004.
\bibitem{CZhu} H.-D. Cao, M. Zhu, \textit{On second variation of Perelman's Ricci shrinker entropy}, Math. Ann., online first, DOI: 10.1007/s00208-011-0701-0, 2011.
\bibitem{C} J. Cheeger, \textit{Degeneration of Einstein metrics and metrics with special holonomy}, Surv. Differ. Geom. VIII, Int. Press, Somerville, MA, 2003.
\bibitem{CN} J. Cheeger, A. Naber, \textit{Lower Bounds on Ricci Curvature and Quantitative Behavior of Singular Sets}, arXiv:1103.1819v2, 2011.
\bibitem{CLW} X. Chen, C. Lebrun, B. Weber, \textit{On conformally {K}\"ahler, {E}instein manifolds}, J. Amer. Math. Soc. 21, no. 4, 1137--1168, 2008.
\bibitem{CM1} T. Colding, W. Minicozzi, \textit{Generic mean curvature flow I; generic singularities}, to appear in Ann. of Math. (2), arXiv:0908.3788v1, 2009.
\bibitem{CM2} T. Colding, W. Minicozzi, \textit{Smooth compactness of self-shrinkers}, arXiv:0907.2594v1, 2009.
\bibitem{Del}T. Delzant, \textit{Hamiltoniens p\'eriodiques et images convexes de l'application moment}, Bull. Soc. Math. France 116, no. 3, 315--339, 1988.
\bibitem{DonPEI}S. K. Donaldson, \textit{Scalar curvature and projective embeddings I}, J. Differential Geom. 18, no. 1, 157--162, 2001.
\bibitem{DonInt}S. K. Donaldson, \textit{Interior estimates for solutions to Abreu's equation}, Collect. Math. 56, no. 2, 103--142, 2005.
\bibitem{Don}S. K. Donaldson, \textit{K\"ahler geometry on toric manifolds and some other manifolds with large symmetry}, Handbook of geometric analysis, Adv. Lect. Math., 7, no. 1, 29--75, 2008.
\bibitem{Donum}S. K. Donaldson, \textit{Some numerical results in complex differential geometry}, Pure Appl. Math. Q. 5, no. 2, 571--618, 2009.
\bibitem{FIK} M. Feldman, T. Ilmanen, D. Knopf, \textit{Rotationally symmetric shrinking and expanding gradient K\"ahler-Ricci solitons}, J. Diff. Geom. 65, no. 2, 169--209, 2003.
\bibitem{GK} A. Gastel, M. Kronz, \textit{A family of expanding Ricci solitons}, Variational problems in Riemannian geometry, 81--93, Progr. Nonlinear Differential Equations Appl. 59, Birkh\"auser, Basel, 2004.
\bibitem{GR} P. Germain, M. Rupflin, \textit{Selfsimilar expanders of the harmonic map flow}, Ann. Inst. H. Poincar{\'e} Anal. Non Lin{\'e}aire  28, 743--773, 2011.
\bibitem{GHP} G. Gibbons, S. Hartnoll, C. Pope, \textit{Bohm and Einstein-Sasaki metrics, black holes, and cosmological event horizons}, Phys. Rev. D (3) 67, no. 8, 2003.
\bibitem{GPY} D. Gross, M. Perry, L. Yaffe, \textit{Instability of flat space at finite temperature}, Phys. Rev. D (3) 25, no. 2, 330--355, 1982.
\bibitem{SHthesis} S. Hall, \textit{Numerical methods and Riemannian geometry}. Ph.D. thesis, Imperial College London, 2011
\bibitem{HallM} S. Hall, T. Murphy, \textit{On the linear stability of K\"ahler-Ricci solitons}, Proc. A. M. S. 139, no. 9, 3327--3337, 2011.
\bibitem{Ham82} R. Hamilton, \textit{Three-manifolds with positive Ricci curvature}, J. Differential Geom. 17, no. 2, 255--306, 1982.
\bibitem{Ha1} R. Haslhofer, \textit{Perelman's lambda-functional and the stability of Ricci-flat metrics}, to appear in Calc. Var. \& PDE, available at arXiv:1003.4633v2, 2010.
\bibitem{Ha2} R. Haslhofer, \textit{A renormalized Perelman-functional and a lower bound for the ADM-mass}, J. Geom. Phys. 61, 2162 -- 2167, 2011.
\bibitem{Ha3} R. Haslhofer, \textit{A mass-decreasing flow in dimension three}, arXiv:1107.3220v2, 2011.
\bibitem{HM} R. Haslhofer, R. M\"uller, \textit{A compactness theorem for complete Ricci shrinkers}, Geom. Funct. Anal. 21, 1091 -- 1116, 2011.
\bibitem{I} T. Ilmanen, \textit{Lectures on mean curvature flow and related equations}, lecture notes ICTP, Trieste, 1995.
\bibitem{Ipc} T. Ilmanen, private communication.
\bibitem{KKM} N. Kapouleas, S. Kleene, N. M{\o}ller, \textit{Mean curvature self-shrinkers of high genus: non-compact examples}, arXiv:1106.5454v1, 2011.
\bibitem{LB} C. LeBrun, \textit{Counter-examples to the generalized positive action conjecture}, Comm. Math. Phys. 118, no. 4, 591--596, 1988.
\bibitem{LeB} C. LeBrun, \textit{Einstein metrics on complex surfaces}, Geometry and physics, Lecture Notes in Pure and Appl. Math. 184, 167--176, 1997.
\bibitem{Loh} J. Lohkamp, \textit{Scalar curvature and hammocks}, Math. Ann. 313, no. 3, 385--407, 1999.
\bibitem{LV} J. Lott, C. Villani, \textit{Ricci curvature for metric-measure spaces via optimal transport}, Ann. of Math. (2), 169, no. 3, 903--991, 2009.
\bibitem{Pa} D. Page, \textit{A compact rotating gravitational instanton}, Phys. Lett. 79B, 235--238, 1979.
\bibitem{Pe} G. Perelman, \textit{The entropy formula for the Ricci flow and its geometric applications}, arXiv:math/0211159v1, 2002.
\bibitem{Sano} Y. Sano \textit{Numerical algorithm for finding balanced metrics}, Osaka J. Math. 43, no. 3, 679--688, 2006.

\bibitem{SY1} R. Schoen, S. T. Yau, \textit{On the structure of manifolds with positive scalar curvature}, Manuscripta Math. 28, no. 1-3, 159--183, 1979.
\bibitem{SY2} R. Schoen, S.T. Yau, \textit{On the proof of the positive mass conjecture in general relativity}, Comm. Math. Phys. 65, no. 1, 45--76, 1979.
\bibitem{SS} F. Schulze, M. Simon, \textit{Expanding solitons with non-negative curvature operator coming out of cones}, 	arXiv:1008.1408v1, 2010.
\bibitem{Se} N. Sesum, \textit{Linear and dynamical stability of Ricci-flat metrics}, Duke Math. J. 133, no. 1, 1--26, 2006.
\bibitem{Siep} M. Siepmann, Ph.D. thesis, ETH Z\"urich, in preparation.
\bibitem{SiL} L. Simon, \textit{Lectures on geometric measure theory}, Proceedings of the Centre for Mathematical Analysis, ANU, Canberra, 1983.
\bibitem{SiJ} J. Simons, \textit{Minimal varieties in riemannian manifolds}, Ann. of Math. (2) 88, 62--105, 1968.
\bibitem{St} K-T. Sturm, \textit{On the geometry of metric measure spaces I \& II}, Acta Math. 196, no. 1, 65--131 \& 133-177, 2006.
\bibitem{Ti} G. Tian, \textit{On Calabi's conjecture for complex surfaces with positive first Chern class}, Invent. Math. 101, no. 1, 101--172, 1990.
\bibitem{Wa} N. P. Warner, \textit{The spectra of operators on $\CC P^n$}, Proc. Roy. Soc. London Ser. A 383, no. 1784, 217--230,  1982.
\bibitem{Wi} E. Witten, \textit{A new proof of the positive energy theorem}, Comm. Math. Phys. 80, no. 3, 381--402, 1981.
\bibitem{Young1983} R. E. Young, \textit{Semiclassical instability of gravity with positive cosmological constant}, Phys. Rev. D 28, no. 10, 2436--2438, 1983.
\end{thebibliography}
\end{document}